\newcommand{\F}{\mathbb{F}}
\newcommand{\KK}{\mathbb{K}}
\newcommand{\N}{\mathbb{N}}
\newcommand{\Q}{\mathbb{Q}}
\newcommand{\R}{\mathbb{R}}
\newcommand{\RR}{\mathbb{R}}
\newcommand{\Z}{\mathbb{Z}}
\newcommand{\comment}[1]{}
\theoremstyle{definition}
\newtheorem{theorem}{Theorem}[section]
\newtheorem{lemma}[theorem]{Lemma}
\newtheorem{proposition}[theorem]{Proposition}
\theoremstyle{definition}
\newtheorem{definition}[theorem]{Definition}
\newtheorem{remark}[theorem]{Remark}
\numberwithin{equation}{subsection}
\theoremstyle{plain}
\newtheorem{thm}{Theorem}[section]
\newtheorem{prop}[theorem]{Proposition}
\newtheorem{lem}[theorem]{Lemma}
\theoremstyle{definition}
\newtheorem{rem}[theorem]{Remark}
\newtheorem{exm}[theorem]{Example}
\newtheorem*{theorem*}{Theorem}
\newtheorem*{lem*}{Lemma}
\newtheorem*{con*}{Conjecture}
\newtheorem{Claim*}{Claim}
\newtheorem*{defn*}{Definition}
\newtheorem*{rem*}{Remark}
\def\imod#1{\allowbreak\mkern10mu\left({\operator@font mod}\,\,#1\right)}
\begin{document}

\title[SymbolicPowers 041417]{Uniform Symbolic Topologies in Normal Toric Rings}
\author{Robert M. Walker}

\address{Department of Mathematics, University of Michigan, Ann Arbor, MI, 48109}
\email{robmarsw@umich.edu}

\parskip=10pt plus 2pt minus 2pt

\begin{abstract} 
Given a normal toric algebra $R$, we compute a uniform integer $D = D(R) > 0$ such that the symbolic power $P^{(D N)} \subseteq P^N$ for all $N >0$ and all monomial primes $P$. 
We compute the multiplier $D$ explicitly in terms of the polyhedral cone data defining $R$. 
In this toric setting, we draw a connection with the F-signature of $R$ in positive characteristic. 
 \end{abstract}

\thanks{2010 \textit{Mathematics Subject Classification:} 13H10, 14C20, 14M25.
}
\thanks{\textit{Keywords:} symbolic powers, rational singularity, toric ring, monomial primes, F-signature, Segre-Veronese.}
\maketitle


\section{Introduction and Conventions for the Paper}
  
Given a Noetherian commutative ring $R$, when is there an integer $D$, depending only on $R$, such that the symbolic power $P^{(D r)} \subseteq P^r$ for all prime ideals $P \subseteq R$ 
 and all positive integers $r$?\footnote{For a definition of symbolic powers of prime and radical ideals, see \cite{5authorSymbolicSurvey}. 
}   
In short, when does $R$ have \textbf{uniform symbolic topologies} on primes \cite{HKV2}? Moreover, can we effectively compute the multiplier $D$ in terms of simple data about $R$? In this paper, we answer this last question in the setting of torus-invariant primes in a normal toric (or semigroup) algebra. 

The Ein-Lazarsfeld-Smith Theorem  \cite{ELS}, as extended by Hochster and Huneke \cite{HH1}, says that if $R$ is a $d$-dimensional regular ring containing a field, and $D = \max \{1 , d-1 \}$,  then $Q^{(Dr)} \subseteq Q^r$ for all radical ideals $Q \subseteq R$ and all $r > 0$.\footnote{This result has been extended to all excellent regular rings, even in mixed characteristic, by Ma-Schwede \cite{MaSchwede17}.} To what extent does this theme ring true for non-regular rings? Under mild stipulations, a local ring $R$ regular on the punctured spectrum has uniform symbolic topologies on primes \cite[Cor.~3.10]{HKV}, although explicit values for $D$ remain elusive. Far less is known for rings with non-isolated singularities. 
  
This manuscript approaches the above questions for the coordinate rings of normal affine toric varieties. Such algebras are combinatorially-defined, finitely generated,   Cohen-Macaulay, normal, and have rational singularities. We deduce an explicit multiplier $D$ such that for any torus-invariant prime $P$ in $R$, 
$P^{(Dr)} \subseteq P^r \mbox{ for all }r>0;$ 
 see Theorems \ref{thm:ToricUSTPMonoPrimes001} and \ref{thm:ITAU-sandwhiching000}  for precise statements. 
Prior work towards explicit $D$ includes the  papers \cite[Table~3.3]{Walker001} and  \cite[Thm.~1.2]{Walker003} for ADE rational surface  singularities and for select domains with non-isolated singularities, respectively;  see also  
\cite[Thm.~3.29, Cor.~3.30]{5authorSymbolicSurvey} for module-finite direct summands of affine polynomial rings.

To state our main results in a special case, fix a ground field $\F$. We fix a full-dimensional pointed convex polyhedral cone $C \subseteq \R^n$ generated by 
 a finite set $G \subseteq \Z^n$, and its dual $C^\vee \subseteq \RR^n$.   
Let $R_\F$ be the semigroup algebra of the semigroup $C^\vee \cap \Z^n$. This  \textbf{toric $\F$-algebra associated to $C$} is a normal domain of finite type over $\F$ \cite[Thm.~1.3.5]{torictome} with an $\F$-basis of monomials $\{\chi^\ell \colon \ell \in C^\vee \cap \Z^n \}.$  An ideal of $R_\F$ is \textbf{monomial  (or torus-invariant)} if it is generated by a subset of these monomials. In what follows, we use $\bullet$ to denote dot products in $\RR^n$.

\begin{thm}[Cf., Theorem \ref{thm:ITAU-sandwhiching000}]\label{thm:ToricUSTPMonoPrimes001}
Suppose $C \subseteq \RR^n$ is a full-dimensional pointed polyhedral cone as above. Set $D := \max_{w \in \mathcal{B}} \left( w \bullet v_G \right)  \in  \Z_{>0}$,  where $\mathcal{B}$ is a generating set for the semigroup $C^\vee \cap \Z^n$ and $v_G \in \Z^n$ is the sum of any $($finite$)$ set $G$ of vectors in $\Z^n$ generating $C$. 
Then $$P^{(Dr)} \subseteq P^{(D(r-1)+1)}  \subseteq P^{r},$$ for all $r>0$ and all monomial prime ideals $P$ in the toric ring $R_\F = \F [C^\vee \cap \Z^n]$.  
\end{thm}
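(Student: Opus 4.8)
The plan is to push everything down to the combinatorics of the semigroup $S:=C^\vee\cap\Z^n$ and then run a ``peeling'' induction on $r$. Recall that the monomial primes of $R_\F$ are precisely the ideals $P_\tau:=\langle\chi^m : m\in S,\ m\notin\tau^\perp\rangle$ as $\tau$ ranges over the faces of $C$; for each such face, $P_\tau$ is generated by the monomials $\chi^b$ with $b\in\mathcal{B}\setminus\tau^\perp$, and $P_\tau^{k}$ is the monomial ideal $\langle\chi^{b_1+\cdots+b_k+s} : b_i\in\mathcal{B}\setminus\tau^\perp,\ s\in S\rangle$. Since $P_\tau$ is the unique minimal prime of the monomial ideal $P_\tau^{k}$, its $P_\tau$-primary component, which equals $P_\tau^{(k)}$, is again a monomial ideal. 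Hence it suffices to prove: for every face $\tau$, every $r\ge 1$, and every $m\in S$ with $\chi^m\in P_\tau^{(D(r-1)+1)}$, one has $\chi^m\in P_\tau^{r}$. The leftmost containment $P^{(Dr)}\subseteq P^{(D(r-1)+1)}$ is then automatic: $v_G$ lies in the interior of the full-dimensional cone $C$ (being the sum of a conical generating set), so $w\bullet v_G\ge 1$ for every nonzero lattice point $w\in C^\vee$, whence $D\ge 1$, $Dr\ge D(r-1)+1$, and symbolic powers shrink as the exponent grows.

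The argument is driven by a single additive functional attached to $\tau$: put $W_\tau:=\sum_{\rho}u_\rho\in C\cap\Z^n$, the sum of the primitive generators of the rays $\rho$ of $\tau$. This vector lies in the relative interior of $\tau$, so for $m\in S$ we have $W_\tau\bullet m\ge 0$, with equality exactly when $m\in\tau^\perp$ (i.e.\ exactly when $\chi^m\notin P_\tau$); in particular $W_\tau\bullet b\ge 1$ for all $b\in\mathcal{B}\setminus\tau^\perp$. Two facts are needed. First, an upper bound on symbolic powers: if $\chi^m\in P_\tau^{(k)}$, choose $s\notin P_\tau$ with $s\chi^m\in P_\tau^{k}$; since $P_\tau$ is monomial, some monomial $\chi^{b_0}$ occurring in $s$ has $b_0\in\tau^\perp$, and then $\chi^{b_0+m}\in P_\tau^{k}$ forces $b_0+m=b_1+\cdots+b_k+s'$ with $b_i\in\mathcal{B}\setminus\tau^\perp$ and $s'\in S$, so applying $W_\tau\bullet(-)$ gives $W_\tau\bullet m=\sum_{i\ge 1}W_\tau\bullet b_i+W_\tau\bullet s'\ge k$. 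Second, a comparison with $v_G$: each ray of $\tau$ is an extreme ray of $C$ and so is spanned by some element of the generating set $G$; distributing $G$ accordingly writes $v_G-W_\tau$ as a conical combination of elements of $C$, so $v_G-W_\tau\in C$ and therefore $W_\tau\bullet m\le v_G\bullet m$ for all $m\in S$; in particular $W_\tau\bullet b\le v_G\bullet b\le D$ for every $b\in\mathcal{B}$.

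Now take $m\in S$ with $\chi^m\in P_\tau^{(D(r-1)+1)}$; by the first fact $W_\tau\bullet m\ge D(r-1)+1$. We peel generators of $P_\tau$ off $m$: having produced $b_1,\dots,b_j\in\mathcal{B}\setminus\tau^\perp$ with $m^{(j)}:=m-b_1-\cdots-b_j\in S$ for some $j\le r-1$, we estimate
\[
W_\tau\bullet m^{(j)}=W_\tau\bullet m-\sum_{i=1}^{j}W_\tau\bullet b_i\ \ge\ \big(D(r-1)+1\big)-jD\ =\ D(r-1-j)+1\ \ge\ 1,
\]
using the second fact, so $m^{(j)}\notin\tau^\perp$, i.e.\ $\chi^{m^{(j)}}\in P_\tau$; as $P_\tau$ is generated by the $\chi^b$ with $b\in\mathcal{B}\setminus\tau^\perp$, there is such a $b_{j+1}$ with $m^{(j+1)}:=m^{(j)}-b_{j+1}\in S$. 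Iterating for $j=0,1,\dots,r-1$ yields $m=b_1+\cdots+b_r+m^{(r)}$ with all $b_i\in\mathcal{B}\setminus\tau^\perp$ and $m^{(r)}\in S$, that is, $\chi^m\in P_\tau^{r}$.

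The main obstacle I anticipate is the first of the two facts above: isolating a purely combinatorial certificate for membership in the symbolic power $P_\tau^{(k)}$, namely the implication $\chi^m\in P_\tau^{(k)}\Rightarrow W_\tau\bullet m\ge k$, and recognizing that the right vector to use is $W_\tau=\sum_{\rho}u_\rho$ — rather than some other lattice point in the interior of $\tau$ — precisely because this one is dominated on $S$ by $v_G$, and hence by $D$ on the generating set $\mathcal{B}$. Once these two facts are in place, the monomiality reductions, the relation $v_G-W_\tau\in C$, and the peeling step are routine bookkeeping.
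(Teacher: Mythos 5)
Your argument is correct and follows essentially the same route as the paper's proof of Theorem~\ref{thm:ITAU-sandwhiching000}: the additive functional $W_\tau$ is exactly the paper's $v_F$, your Fact~1 is Lemma~\ref{lem:monomial-sandwiching01} (the containment $P_F^{(E)}\subseteq I_F(E)$), and your greedy peeling induction is a mild rephrasing of Lemma~\ref{lem:monomial-sandwiching02}, which instead decomposes the exponent $\ell$ over the Hilbert basis $\mathcal{B}$ and counts how many summands lie outside $\tau^\perp$. The only genuine addition is your explicit verification that $v_G-W_\tau\in C$ for an arbitrary generating set $G$, whereas the paper proves the sharper $v_C$-version and lets the $v_G$ bound follow by domination.
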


\noindent We get the best result in Theorem \ref{thm:ToricUSTPMonoPrimes001} by taking $G$ to consist of the unique set of primitive generators for $C$, in which case we write $v_C$ in place of $v_G$; see Section \ref{sec:Toric-Alg-Prelims00}. For example, if $R_\F$ is the $E$-th Veronese subring of a polynomial ring of finite type over $\F$, then our $D = E$ in Theorem \ref{thm:ToricUSTPMonoPrimes001}. As another example,  for the Segre product of Veronese rings of respective degrees $E_1, \ldots, E_k$, 
we can take $D = \sum_{i=1}^k E_i$ in Theorem \ref{thm:ToricUSTPMonoPrimes001};  see Theorem \ref{thm:SegreVeroMonoUSTP01}.

The next result covers select non-monomial primes. We note that a toric algebra satisfying the additional hypothesis below is called \textbf{simplicial}; see the discussion around Theorem \ref{thm: exact sequence}.  
\begin{thm}[{Cf., Theorem \ref{thm:height-one-primes}}]\label{thm:VarPi-FSig-Bound01} \textit{With notation as in Theorem \ref{thm:ToricUSTPMonoPrimes001}, assume moreover that 
 the divisor class group $\operatorname{Cl}(R_\F)$ is finite. 
 Set $U  := \operatorname{lcm} \{ \max_{w \in \mathcal{B}} (w \bullet v_C) ,  \# \operatorname{Cl}(R_\F) \}  \in \Z_{>0},$ where $\mathcal{B}$ and $v_C$ are as in Theorem \ref{thm:ToricUSTPMonoPrimes001}.   
 Then $$P^{(U (r-1) + 1)} \subseteq P^r$$ for all $r>0$, all monomial primes in $R_\F$, and all height one primes in $R_\F$.  
}\end{thm}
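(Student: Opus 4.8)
The proof naturally splits according to the type of prime: the statement for monomial primes will be an immediate consequence of Theorem~\ref{thm:ToricUSTPMonoPrimes001}, while the statement for height one primes requires a short divisorial argument exploiting the finiteness of $\operatorname{Cl}(R_\F)$. Throughout, write $R := R_\F$, $D := \max_{w \in \mathcal{B}}(w \bullet v_C)$ for the multiplier of Theorem~\ref{thm:ToricUSTPMonoPrimes001} (with $G$ the primitive generators of $C$), and $h := \#\operatorname{Cl}(R)$, so that $U = \operatorname{lcm}(D,h)$ and in particular $D \mid U$ and $h \mid U$. For a monomial prime $P$, Theorem~\ref{thm:ToricUSTPMonoPrimes001} gives $P^{(D(r-1)+1)} \subseteq P^r$; since $U(r-1)+1 \ge D(r-1)+1$ and symbolic powers reverse inclusions of exponents, $P^{(U(r-1)+1)} \subseteq P^{(D(r-1)+1)} \subseteq P^r$. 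So it remains to treat height one primes, where I will only need the divisibility $h \mid U$.

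For the height one case, fix a height one prime $P$ of the normal domain $R$ and recall the standard facts that $R_P$ is a DVR with normalized valuation $v = v_P$, that $R = \bigcap_Q R_Q$ over height one primes $Q$, and that $P^{(m)} = P^m R_P \cap R = \{\, g \in R : v(g) \ge m \,\}$ for every $m \ge 0$. Because $\operatorname{Cl}(R)$ is finite of order $h$, the Weil divisor class of $P$ satisfies $h[P] = 0$, so $hP = \operatorname{div}(f)$ for some nonzero $f \in R$ with $v(f) = h$ and $v_Q(f) = 0$ for every height one prime $Q \ne P$. A one-line valuation check --- an element $g \in R$ lies in $(f^k)$ if and only if $g/f^k$ is regular at every height one prime, i.e.\ if and only if $v(g) \ge hk$ --- then yields $(f)^k = (f^k) = P^{(hk)}$ for all $k \ge 0$. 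In particular $f \in P^{(h)} \subseteq P$, so $(f)^k \subseteq P^k$, which already gives the crude bound $P^{(hk)} \subseteq P^k$.

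To upgrade this to the linear bound, note that $h \mid U$ gives $U(r-1)+1 \ge h(r-1)+1$, so it suffices to prove $P^{(h(r-1)+1)} \subseteq P^r$. Let $g \in P^{(h(r-1)+1)}$, so $v(g) \ge h(r-1)+1$ and $v_Q(g) \ge 0$ for every height one prime $Q$. Then $g/f^{r-1}$ has $v$-value $v(g) - h(r-1) \ge 1$ and nonnegative value at every other height one prime, hence $g/f^{r-1} \in R$ and in fact $g/f^{r-1} \in P$. Therefore
\[
 g \in (f^{r-1}) \cdot P \;=\; P^{(h(r-1))} \cdot P \;\subseteq\; P^{r-1} \cdot P \;=\; P^r,
\]
using $(f^{r-1}) = P^{(h(r-1))} \subseteq P^{r-1}$ from the previous paragraph. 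This proves the claim for height one primes and completes the proof.

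The only genuinely substantive input is that finiteness of $\operatorname{Cl}(R_\F)$ forces some symbolic power of each height one prime to be principal; once that is available, everything reduces to valuation bookkeeping in the DVR $R_P$. Two points warrant care: a single multiplier $U$ must serve both families of primes simultaneously, which is why $U$ is taken to be a common multiple (here the $\operatorname{lcm}$) of the monomial-prime multiplier $D$ from Theorem~\ref{thm:ToricUSTPMonoPrimes001} and the class number $h$; and the ``$+1$'' in the exponent $h(r-1)+1$ is precisely what makes the quotient $g/f^{r-1}$ land inside $P$ rather than merely inside $R_\F$, which is what converts the crude bound $P^{(hk)}\subseteq P^k$ into the sharper linear statement.
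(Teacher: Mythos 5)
Your proof is correct and follows essentially the same route as the paper: monomial primes are handled by Theorem~\ref{thm:ToricUSTPMonoPrimes001}, height one primes are handled via the finiteness of $\operatorname{Cl}(R_\F)$, and $U$ is chosen as a common bound serving both families simultaneously. The only departure is that where the paper simply cites Lemma~\ref{thm: du Val bound 1} (\cite[Lem.~1.1]{Walker002}) for the height-one containment $P^{(h(r-1)+1)}\subseteq P^r$, you re-derive it from scratch by writing $hP=\operatorname{div}(f)$, identifying $(f^k)=P^{(hk)}$, and checking via valuations that $g/f^{r-1}$ lands in $P$; this is the same underlying argument, just made self-contained.
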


\noindent Tighter containments of the type $I^{(E(r-1)+1)} \subseteq I^r$, as in Theorems \ref{thm:ToricUSTPMonoPrimes001} and \ref{thm:VarPi-FSig-Bound01}  and first promoted by Harbourne, hold for  all monomial ideals in an affine polynomial ring over any field \cite[Ex.~8.4.5]{Primer}; see also recent work of Grifo-Huneke \cite{GrifoHun00}. 

At the end of Section \ref{sec:main-results}, we draw connections between the multiplier $U$ and the so-called F-signature of $R_\F$. In Section \ref{sec:Examples}, we discuss the extent to which the multipliers $D$ and $U$ in Theorems \ref{thm:ToricUSTPMonoPrimes001} and \ref{thm:VarPi-FSig-Bound01} are sharp.

\noindent \textbf{Conventions:} Throughout, $\F$ denotes an arbitrary ground field of arbitrary characteristic. All rings are commutative with identity--indeed, they are normal domains of finite type over $\F$. 

\noindent \textbf{Acknowledgements:} This paper is part of my Ph.D. thesis at the University of Michigan-Ann Arbor. My thesis adviser Karen E. Smith, along with Daniel Hern\'{a}ndez and  Jack Jeffries, suggested studying precursors for the ideals $I_\bullet (E)$ in the proof of Theorem \ref{thm:ITAU-sandwhiching000}. I am grateful for this idea.  
I acknowledge support from a NSF GRF (Grant No.  PGF-031543), NSF RTG grant DMS-0943832, and a 2017 Ford Foundation Dissertation Fellowship. 
Several computations were performed using the \texttt{Polyhedra} package in 
 Macaulay2 \cite{M2} to gain an incisive handle on the polyhedral geometry.

\section{Toric Algebra Preliminaries}\label{sec:Toric-Alg-Prelims00}

We review notation and relevant facts from toric algebra, citing Cox-Little-Schenck \cite[Ch.1,3,4]{torictome} and Fulton \cite[Ch.1,3]{introtoric}. A lattice is a free abelian group of finite rank. We 
  fix a perfect bilinear pairing $\langle \cdot , \cdot \rangle \colon M \times N \to \Z$ between two lattices $M$ and $N$;  this identifies $M$ with $\operatorname{Hom}_\Z (N, \Z)$ and  $N$ with $\operatorname{Hom}_\Z (M, \Z)$.  
Our pairing extends to a perfect pairing of finite-dimensional vector spaces $\langle \cdot , \cdot \rangle \colon M_\R \times N_\R \to \R$, where $M_\R := M \otimes_\Z \R$ and $N_\R := N \otimes_\Z \R$. 

Fix an \textbf{$N$-rational} polyhedral cone and its $M$-rational dual: respectively, for some \textbf{finite} subset $G \subseteq N - \{0\}$ these are closed, convex sets of the form 
\begin{align*}
C &= \operatorname{Cone}(G) := \left\lbrace\sum_{v \in G} a_v \cdot v \colon \mbox{ each }a_v \in \RR_{\ge 0}\right\rbrace  \subseteq N_\R, \mbox{ and }   \\
C^\vee &:= \{w \in M_\R \colon \langle w , v \rangle \ge 0 \mbox{ for all }v \in C \} = \{w \in M_\R \colon \langle w , v \rangle \ge 0 \mbox{ for all }v \in G\}.
\end{align*}
By definition, the \textbf{dimension} of a cone in $M_\R$ or $N_\R$ is the dimension of the real vector subspace it spans; a cone is \textbf{full(-dimensional)} if it spans the full ambient space. A cone in $M_\R$ or $N_\R$ is \textbf{pointed (or strongly convex)} if it contains no line through the origin.  
A \textbf{face} of $C$ is a convex polyhedral cone $F$ in $N_\R$ obtained by intersecting $C$ with a hyperplane which is the kernel of a linear functional $m \in C^\vee$; $F$ is \textbf{proper} if $F \neq C$. 
When $C$ is both $N$-rational and pointed, so is every face $F$. Each such face $F \neq \{0\}$ has a uniquely-determined set $G_F$ of primitive generators. By definition, $v \in N$ is \textbf{primitive} if $\frac{1}{k} \cdot v \not\in N$ for all $k \in \Z_{>1}$.  

There is a bijective inclusion-reversing correspondence between faces $F$ of $C$ and faces $F^*$ of $C^\vee$, where $F^* = \{w \in C^\vee \colon \langle w , v \rangle = 0 \mbox{ for all }v \in F\}$ is the face of $C^\vee$ \textbf{dual to} $F$ \cite[Sec.~1.2]{introtoric}. Under this correspondence, 
either cone is pointed  if and only if the other is full, and   
\begin{equation}\label{eqn:face-duality-identity01}
\dim (F) + \dim (F^*) = \dim (N_\R) = \dim(M_\R).
\end{equation}

Fix an arbitrary ground field $\F$ and a cone $C$ as above in $N_\R$. The semigroup ring $R_\F = \F [C^\vee \cap M]$ is the \textbf{toric $\F$-algebra associated to $C$}. This ring $R_\F$ is a normal domain   of finite type over $\F$ \cite[Thm.~1.3.5]{torictome}. Note that $R_\F$ has an $\F$-basis $\{ \chi^m  \colon m \in  C^\vee \cap M \}$ of monomials, giving $R_\F$ an $M$-grading, where $\deg(\chi^{m}) := m$. 
A \textbf{monomial ideal (also called an $M$-homogeneous or torus-invariant ideal)} in $R_\F$ is an ideal generated by a subset of these monomials.  When $C^\vee$ is pointed, $R_\F$ also has a non-canonical $\N$-grading obtained by fixing any group homomorphism $M \to \Z$  taking positive values $C^\vee \cap M - \{0\}$. 
The set $\{\chi^m \colon m \in C^\vee \cap M - \{0\} \}$ generates the unique homogeneous maximal ideal $\mathfrak{m}$ under this $\N$-grading.  
 
\begin{remark}\label{rem:strong-convexity-stipulation}
In forming the toric algebra $\F [C^\vee \cap M]$, there is no loss of generality in assuming $C$ is pointed in $N_\R$. Indeed, because $C^\vee \cap M = C^\vee \cap M'$ where $M' = M \cap \{\mbox{$\R$-span of $C^\vee$ in $M_\R$}\},$ we may replace $M$ by $M'$ to assume $C^\vee$ is full in $(M')_\R$. Now, replacing $N$ and $C$ by the duals of $M'$ and $C^\vee$, we may assume that $C$ is pointed in $N' = \operatorname{Hom}_\Z (M' , \Z)$. See \cite[Thm.~1.3.5]{torictome} for details. 
\end{remark}

\noindent Fix a face $F$ of a pointed cone $C$:   \cite[p.53]{introtoric}   
 records a surjective $M$-graded ring map  
\begin{align*}
\phi_F \colon R_\F = \F[C^\vee \cap M] \twoheadrightarrow \F[F^* \cap M], \quad \phi_F (\chi^{m}) &= 
\begin{cases} \chi^{m}  &\mbox{ if $\langle m , v \rangle = 0$ for all }v \in F \\
0 &\mbox{ if $\langle m , v \rangle > 0$ for some }v \in F.   \end{cases} 
\end{align*}
Both rings are domains. The \textbf{monomial prime ideal of $F$}, $P_F : = \ker (\phi_F)$, has height equal to $\dim (F)$.     
Conversely, any monomial prime of $R_\F$ corresponds bijectively to a face of $C$. 

\begin{lem}\label{lem:prime-positivity}
Fix a face $F$ of a pointed cone $C$, and the monomial prime $P_F \subseteq R_\F$ above. Let $G_F$ be the set of primitive generators of $F$, and set $v_F : =  \sum_{v \in G_F} v \in F \cap N$.  Then 
\begin{equation}\label{eqn:mono-prime-defn}
P_F = (\{\chi^{m} \colon \mbox{$m \in C^\vee \cap M$ and the integer $\langle m , v_F  \rangle > 0$}\})R_\F.
\end{equation}
\end{lem}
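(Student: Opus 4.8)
The plan is to show the two ideals in \eqref{eqn:mono-prime-defn} coincide by checking which monomials $\chi^m$ lie in $P_F = \ker(\phi_F)$ and matching this with the positivity condition $\langle m, v_F \rangle > 0$. First I would recall that since $R_\F$ has the monomial $\F$-basis $\{\chi^m : m \in C^\vee \cap M\}$ and $\phi_F$ is $M$-graded, the kernel $P_F$ is itself a monomial ideal, generated precisely by those $\chi^m$ with $\phi_F(\chi^m) = 0$, i.e. by those $m \in C^\vee \cap M$ such that $\langle m, v \rangle > 0$ for some $v \in F$. So it suffices to prove, for $m \in C^\vee \cap M$, the equivalence
\[
\langle m, v \rangle > 0 \text{ for some } v \in F \iff \langle m, v_F \rangle > 0.
\]

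For the direction $(\Leftarrow)$, since $v_F = \sum_{v \in G_F} v \in F$, the element $v_F$ itself is a witness $v \in F$, so this is immediate. The substantive direction is $(\Rightarrow)$: suppose $\langle m, v \rangle > 0$ for some $v \in F$. Because $m \in C^\vee$ and $F \subseteq C$, we have $\langle m, v' \rangle \ge 0$ for every $v' \in F$; in particular $\langle m, v'\rangle \ge 0$ for each primitive generator $v' \in G_F$. Since $v = \sum_{v' \in G_F} a_{v'} v'$ with $a_{v'} \ge 0$, the hypothesis $\langle m, v \rangle = \sum a_{v'} \langle m, v' \rangle > 0$ forces $\langle m, v'' \rangle > 0$ for at least one $v'' \in G_F$. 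Then $\langle m, v_F \rangle = \sum_{v' \in G_F} \langle m, v' \rangle \ge \langle m, v'' \rangle > 0$, since all summands are nonnegative. This proves $(\Rightarrow)$ and hence the equivalence, so the generating sets of monomials for $P_F$ and for the right-hand ideal in \eqref{eqn:mono-prime-defn} are identical, giving the claimed equality.

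The only mild subtlety — and the step I would be most careful about — is the reduction to primitive generators: one must use that $F$ is genuinely a cone generated (over $\RR_{\ge 0}$) by the finite set $G_F$, which is guaranteed because $C$ is $N$-rational and pointed, so each face $F$ is too and has a well-defined finite primitive generating set $G_F$ (as recorded in the text preceding the lemma). It is worth noting explicitly that $v_F \in F \cap N$ lies in $N$ (being a finite sum of lattice vectors) so that $\langle m, v_F \rangle$ is indeed an integer, as asserted in the statement; this is automatic. No genuine obstacle arises — the argument is a direct unwinding of the definition of $\phi_F$ together with the nonnegativity pairing between $C^\vee$ and $C$ — but care is needed to phrase the "some summand is positive" step cleanly and to record why it suffices to test monomial generators.
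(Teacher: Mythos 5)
Your proof is correct and takes essentially the same approach as the paper: you reduce to testing positivity against the primitive generators $G_F$ (the paper's ``work with $v \in G_F$ without loss of generality''), and then use the observation that a sum of nonnegative integers is positive iff some summand is. You merely spell out in more detail the reduction step that the paper treats as immediate.
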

\begin{proof}
First,  in defining $\phi_F (\chi^m)$ above, notice we can work with $v \in G_F$ without loss of generality. Now, fix $m \in C^\vee \cap M$. Then $\langle m , v \rangle \in \Z_{\ge 0}$ for all $v \in C \cap N$. As $\langle \cdot , \cdot   \rangle$ is bilinear, \eqref{eqn:mono-prime-defn} follows since a sum of nonnegative integers is positive if and only if one of the summands is positive.    
\end{proof}

\subsubsection{Hilbert Bases.}\label{subsub:Hilb00} First, suppose the pointed cone $C$ from Remark \ref{rem:strong-convexity-stipulation} is full.  Then there is a uniquely-determined minimal generating set $\mathcal{B}$ for $C^\vee \cap M$, in the sense that any other generating set contains $\mathcal{B}$. The set $\mathcal{B}$ is called the \textbf{Hilbert basis} of the semigroup, and consists of the \textbf{irreducible} vectors $m \in C^\vee \cap M - \{0\}$; a vector $v \in C^\vee \cap M$ is irreducible if it cannot be expressed as a sum of two vectors $m \in C^\vee \cap M - \{0\}$. 
See \cite[Prop.~1.2.17]{torictome} and \cite[Prop.~1.2.23]{torictome} for details.

\noindent In general, the pointed cone $C$ need not be full. Thus the next proposition is handy. 
\begin{prop}\label{prop: proof reduction}
Let $N'_\R$ by the $\R$-span of a pointed cone $C \subseteq N_\R$. Set $N' = N'_\R \cap N$, and consider $C$ as a full-dimensional cone in $N'_\R$ $($relabeled as $C' )$. Let $M' = \operatorname{Hom}_\Z (N', \Z)$ be the dual lattice. The toric ring $R_\F := \F [C^\vee \cap M]$ is isomorphic to $R_\F' \otimes_\F L$ where the toric ring $R_\F':= \F [(C')^\vee \cap M']$ and $L$ is a Laurent polynomial ring over $\F$. In particular, there is a bijective correspondence between the monomial primes of $R'_\F$ and $R_\F$ given by expansion and contraction of ideals along the faithfully flat ring map $\varphi \colon R'_\F \hookrightarrow R_\F' \otimes L = R_\F$.  
\end{prop}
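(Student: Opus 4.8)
The plan is to exhibit the isomorphism $R_\F \cong R_\F' \otimes_\F L$ concretely at the level of semigroups, then deduce the statement about monomial primes. First I would decompose the lattice $M$. Since $C^\vee$ is the dual of the pointed cone $C' = C$ viewed in $N'_\R$, the annihilator $(N')^{\perp} = \{ m \in M_\R : \langle m, v\rangle = 0 \text{ for all } v \in N'_\R \}$ is a rational subspace of $M_\R$; set $M'' = M \cap (N')^{\perp}$, a saturated sublattice of $M$. Restriction of the pairing gives a surjection $M \twoheadrightarrow M' = \operatorname{Hom}_\Z(N',\Z)$ (surjective because $N'$ is saturated in $N$), with kernel exactly $M''$. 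Choosing a splitting, we get $M \cong M' \oplus M''$ as groups, and under this splitting $C^\vee \cap M = ((C')^\vee \cap M') \oplus M''$ --- the key point being that, because $C'$ spans $N'_\R$, the inequalities $\langle m, v\rangle \ge 0$ for $v \in C'$ only constrain the $M'$-component, leaving the $M''$-component free. Taking $\F$-algebras of semigroups turns direct sums of semigroups into tensor products of algebras, so $R_\F = \F[C^\vee \cap M] \cong \F[(C')^\vee \cap M'] \otimes_\F \F[M''] = R_\F' \otimes_\F L$, where $L = \F[M'']$ is a Laurent polynomial ring in $\operatorname{rank}(M'') = \dim N_\R - \dim N'_\R$ variables.

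Next I would record that $\varphi \colon R_\F' \hookrightarrow R_\F' \otimes_\F L$ is faithfully flat: $L$ is free as an $\F$-module, hence $R_\F' \otimes_\F L$ is free, hence flat, over $R_\F'$; and it is faithfully flat because the structure map hits every maximal ideal (one can see surjectivity on $\operatorname{Spec}$ since a Laurent polynomial ring over $R_\F'$ has $\operatorname{Spec}$ surjecting onto $\operatorname{Spec} R_\F'$, as $L$ is faithfully flat over $\F$ and the extension is "adding units"). For the bijection on monomial primes: a monomial prime $P \subseteq R_\F$ is, by the discussion before Lemma 2.6, the kernel of the face map $\phi_F$ for a face $F$ of $C$; but faces of $C$ are the same whether $C$ is regarded inside $N_\R$ or inside $N'_\R$ (a face is cut out by a supporting hyperplane, and $C$ lies in $N'_\R$), so faces of $C$ correspond bijectively to faces of $C'$. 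Under the isomorphism $R_\F \cong R_\F' \otimes_\F L$, the face map $\phi_F$ on $R_\F$ corresponds to $\phi_{F'} \otimes \operatorname{id}_L$ on $R_\F' \otimes_\F L$, whose kernel is $P_{F'} \otimes_\F L = P_{F'}R_\F$, i.e. the expansion of the monomial prime $P_{F'} \subseteq R_\F'$. Conversely each monomial prime $P' \subseteq R_\F'$ expands to $P' \otimes_\F L$, which is prime (quotient is $(R_\F'/P') \otimes_\F L$, a domain since it is a Laurent extension of the domain $\F[F^* \cap M']$) and monomial, and contracts back to $P'$ by faithful flatness. This gives the asserted inverse bijections via expansion and contraction.

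The main obstacle I anticipate is the lattice bookkeeping in the first paragraph: one must verify carefully that $N' = N'_\R \cap N$ is saturated in $N$ (so that $M \to M'$ is genuinely surjective) and that the splitting $C^\vee \cap M = ((C')^\vee \cap M') \oplus M''$ holds on the nose rather than merely up to finite index --- this is exactly where full-dimensionality of $C'$ in $N'_\R$ is used, ensuring $(C')^\vee$ is pointed in $M'_\R$ and that no defining inequality of $C^\vee$ involves the $M''$-directions. Everything after that (flatness, identification of face maps, primality of expansions) is formal. A cleaner alternative, which I would mention if space permits, is simply to cite \cite[Thm.~1.3.5]{torictome} or the discussion in Remark \ref{rem:strong-convexity-stipulation} for the ring isomorphism and only prove the monomial-prime correspondence by hand; but since the expansion/contraction statement along $\varphi$ is what later sections need, it is worth writing out the compatibility of $\phi_F$ with the tensor decomposition explicitly.
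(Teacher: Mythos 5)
Your argument is correct, and it is essentially the standard argument that the paper is pointing at: the paper's own ``proof'' is just a two-source citation (\cite[Proof of Prop.~3.3.9]{torictome} for the lattice/semigroup splitting giving $R_\F \cong R'_\F \otimes_\F L$, and \cite{Walker002} for the monomial-prime correspondence along the faithfully flat inclusion), and you have reconstructed in full the content of those citations---lattice splitting via saturation of $N'$ in $N$, the identification $C^\vee \cap M = \pi^{-1}\bigl((C')^\vee \cap M'\bigr)$, passage to semigroup algebras, and the face-by-face matching of kernels $\ker(\phi_{F'} \otimes \operatorname{id}_L) = P_{F'}\otimes_\F L$. One small stylistic point: your justification of faithful flatness is a bit circular (``surjective on $\operatorname{Spec}$ because $L$ is faithfully flat over $\F$''); the clean statement is simply that $\F \to L$ is faithfully flat (nonzero free module), and faithful flatness is stable under the base change $\F \to R'_\F$, giving $R'_\F \to R'_\F \otimes_\F L$ faithfully flat directly. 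Otherwise the proposal matches what the paper's citations deliver.
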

\begin{proof}
Combine \cite[Proof of Prop.~3.3.9]{torictome} with  \cite[Discussion/Proof preceding Lem.~3.1]{Walker002}. 
\end{proof}

\subsubsection{Toric Divisor Theory}  Given a Noetherian normal domain $R$, the \textbf{divisor class group} $\operatorname{Cl}(R) = \operatorname{Cl}(\operatorname{Spec}(R))$ is the free abelian group on the set of height one prime ideals of $R$ modulo relations $ \sum_{i=1}^r a_i P_i = 0 $ when  the ideal $\bigcap_{i=1}^r P_i^{(a_i)}$ is principal.    
Note $\operatorname{Cl}(R)$ is trivial if and only if $R$ is a UFD, i.e., all height one primes in $R$ are principal. We recall the following   

\begin{lem}
[Cf., Lem.~1.1 of \cite{Walker002}] \label{thm: du Val bound 1}
When every element of $\operatorname{Cl}(R) : = \operatorname{Cl(Spec}(R))$ is annihilated by an integer $D > 0$, written as $D \cdot \operatorname{Cl}(R) = 0$, the symbolic power $P^{(D(r-1)+1)} \subseteq P^r$ for all $r> 0$  and all prime ideals $P \subseteq R$ of height one. 
\end{lem}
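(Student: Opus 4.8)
The plan is to reduce the statement to valuation computations at height one primes, using that a Noetherian normal domain is a Krull domain: one has $R=\bigcap_Q R_Q$ inside $\operatorname{Frac}(R)$, the intersection running over the height one primes $Q$, each $R_Q$ a DVR with normalized valuation $v_Q$, and, for such a prime $P$, the divisorial description $P^{(n)}=P^nR_P\cap R=\{x\in R:v_P(x)\ge n\}$. Fix a height one prime $P$. Since $D\cdot\operatorname{Cl}(R)=0$, the class of $P$ satisfies $D\cdot[P]=0$; as the divisor $D\cdot P$ is effective, this means precisely that $P^{(D)}$ is a principal ideal, say $P^{(D)}=fR$, and that $\operatorname{div}(f)=D\cdot P$. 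Equivalently, $v_P(f)=D$ and $v_Q(f)=0$ for every height one prime $Q\neq P$; in particular $f\in P^{(D)}\subseteq P$.

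Now fix $r>0$ and let $x\in P^{(D(r-1)+1)}$, so $v_P(x)\ge D(r-1)+1$. I would consider $g:=x/f^{\,r-1}\in\operatorname{Frac}(R)$ and show it lies in $P$. At $P$: $v_P(g)=v_P(x)-(r-1)v_P(f)\ge D(r-1)+1-D(r-1)=1$. At any other height one prime $Q$: $v_Q(g)=v_Q(x)-(r-1)v_Q(f)=v_Q(x)\ge 0$, since $x\in R$ and $v_Q(f)=0$. Hence $g\in R_Q$ for every height one $Q$, so $g\in R$ by normality, and then $v_P(g)\ge 1$ forces $g\in PR_P\cap R=P$. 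Therefore $x=f^{\,r-1}g$ with $f\in P$ and $g\in P$, so $x\in P^{r-1}\cdot P=P^r$. Letting $x$ vary gives $P^{(D(r-1)+1)}\subseteq P^r$; the case $r=1$ is just $P^{(1)}=P$.

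I do not anticipate a serious obstacle, since once the Krull-domain dictionary is set up the argument is bookkeeping with valuations. The one point meriting care is the passage ``$D\cdot[P]=0$ in $\operatorname{Cl}(R)$ $\Longrightarrow$ $P^{(D)}$ is \emph{genuinely principal} with divisor $D\cdot P$'': one must invoke that an effective principal divisor on a Krull domain is the divisor of an actual element of $R$ (equivalently, that the class group, defined here via principal-ideal relations, detects exactly this), and then track that this element has valuation $D$ at $P$ and $0$ elsewhere. Everything downstream---the single division $x\mapsto x/f^{\,r-1}$ and the normality test $R=\bigcap_Q R_Q$---is routine.
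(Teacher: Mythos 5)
Your proof is correct. The paper does not give its own proof of this lemma---it cites Lemma~1.1 of \cite{Walker002}---so a line-by-line comparison is not possible, but the Krull-domain valuation argument you give is the standard and natural one: from $D\cdot[P]=0$ in $\operatorname{Cl}(R)$ produce $f\in R$ with $P^{(D)}=fR$ and $\operatorname{div}(f)=D\cdot P$, divide $x\in P^{(D(r-1)+1)}$ by $f^{r-1}$, and use $R=\bigcap_Q R_Q$ to place the quotient $g$ in $R$ (hence in $P$, since $v_P(g)\ge 1$), so $x=f^{r-1}g\in P^{r-1}\cdot P=P^r$. The subtlety you explicitly flag---that an effective divisor which is torsion in $\operatorname{Cl}(R)$ is the divisor of a genuine element $f\in R$, and that this $f$ then generates the divisorial ideal $P^{(D)}$---is indeed the crux, and your treatment of it (effectivity forces $v_Q(f)\ge 0$ for all $Q$, hence $f\in R$, and the two-sided inclusion $fR=P^{(D)}$ follows from the valuation data) is accurate.
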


Working over an algebraically closed field $\F$, fix a pointed cone $C$ as in Remark \ref{rem:strong-convexity-stipulation} and the pair of rings $R_\F$ and $R'_\F$ as in Proposition \ref{prop: proof reduction}. 
When $C \neq \{0\}$, each $\rho \in \Sigma(1)$, the collection of \textbf{rational rays (one-dimensional faces)} of $C$,  yields a unique primitive generator $u_\rho \in \rho \cap N$ for $C$ and a torus-invariant height one prime ideal $P_\rho$ in $R'_\F$; cf.,  \cite[Thm.~3.2.6]{torictome}.  
The torus-invariant height one primes generate a free abelian group $\bigoplus_{\rho \in \Sigma(1)} \Z  P_\rho$ which maps surjectively onto the divisor class group of $R'_\F$.  More precisely, we record the following well-known theorem; see \cite[Ch.~4]{torictome}.\footnote{This result follows from \cite[Prop.~3.3.9, Prop.~4.1.1-4.1.2, Thm.~4.1.3, Exer.~4.1.1-4.1.2, Prop.~4.2.2, Prop.~4.2.6, and Prop.~4.2.7]{torictome}, essentially consolidating what facts we need to bear in mind going forward in the manuscript.} 
\begin{theorem}\label{thm: exact sequence}
\textit{
With notation as in Proposition \ref{prop: proof reduction}, let $C \subseteq N_\R$ be a pointed cone with primitive  generators $\Sigma(1)$ as described above. 
Then there is a short exact sequence 
\begin{equation}\label{eqn:SES001}
0 \to M' \stackrel{\phi}{\to} \bigoplus_{\rho \in \Sigma(1)} \Z  P_\rho \to \operatorname{Cl}(R'_\F) \to 0,
\end{equation} 
where $\phi (m) = \operatorname{div}(\chi^m)=  \sum_{\rho \in \Sigma(1)}  \langle m , u_\rho\rangle  P_\rho$. 
Furthermore, $\operatorname{Cl}(R_\F)$ and $\operatorname{Cl}(R'_\F)$ are isomorphic,  $\operatorname{Cl}(R_\F)$ is finite abelian if and only if $C$ is simplicial, and trivial if and only if  $C$ is smooth.
}
\end{theorem}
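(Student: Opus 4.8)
The plan is to reduce to the full-dimensional case via Proposition \ref{prop: proof reduction} and then assemble the pieces of standard toric divisor theory listed in the footnote, adding the small amount of bookkeeping that is actually needed. Throughout I write $X = \operatorname{Spec}(R'_\F)$, the normal affine toric variety attached to the full-dimensional pointed cone $C' \subseteq N'_\R$, with dense torus $T = \operatorname{Spec}(\F[M'])$.

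First I would establish the short exact sequence \eqref{eqn:SES001}. The complement $X \setminus T$ is the union of the torus-invariant prime divisors $V(P_\rho)$, $\rho \in \Sigma(1)$. Since $\F[M']$ is a Laurent polynomial ring, hence a UFD, $\operatorname{Cl}(T) = 0$, so the excision exact sequence for class groups of a normal variety gives that $\bigoplus_{\rho}\Z P_\rho \to \operatorname{Cl}(R'_\F)$, $P_\rho \mapsto [P_\rho]$, is surjective. Its kernel is the group of torus-invariant principal divisors: if $\operatorname{div}(f)$ is supported on $X \setminus T$, then $f$ has no zeros or poles on $T$, hence $f \in \F[M']^\times = \F^\times \cdot \{\chi^m : m \in M'\}$, so $\operatorname{div}(f) = \operatorname{div}(\chi^m)$ for some $m \in M'$, and computing orders of vanishing along each $P_\rho$ gives $\operatorname{div}(\chi^m) = \sum_\rho \langle m, u_\rho\rangle P_\rho$ \cite[Prop.~4.1.1--4.1.2]{torictome}. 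Injectivity of $\phi$ is immediate: because $C'$ is full-dimensional in $N'_\R$, the primitive ray generators $u_\rho$ span $N'_\R$ over $\R$, so $\langle m, u_\rho\rangle = 0$ for all $\rho$ forces $m = 0$. This yields exactness of \eqref{eqn:SES001}.

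Next, $\operatorname{Cl}(R_\F) \cong \operatorname{Cl}(R'_\F)$: by Proposition \ref{prop: proof reduction}, $R_\F \cong R'_\F \otimes_\F L = R'_\F[t_1^{\pm 1}, \dots, t_k^{\pm 1}]$ for a Laurent polynomial ring $L$; now apply Nagata's theorem $\operatorname{Cl}(A[t]) \cong \operatorname{Cl}(A)$ for Noetherian normal $A$, and note that inverting $t$ localizes at the principal height-one prime $(t)$ and hence leaves the class group unchanged, iterating over $t_1, \dots, t_k$. Finally, the two equivalences come from tensoring \eqref{eqn:SES001} with $\Q$ and elementary $\Z$-linear algebra. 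Since $\phi$ is injective, its image has rank $\operatorname{rank} M' = \dim C$, while $\bigoplus_\rho \Z P_\rho$ has rank $\# \Sigma(1)$; thus $\operatorname{Cl}(R'_\F)$ is finite if and only if $\#\Sigma(1) = \dim C$, which is exactly the definition of $C$ being simplicial. For triviality, $\operatorname{Cl}(R'_\F) = 0$ iff $\phi$ is surjective, and together with injectivity this says the matrix $(\langle m_i, u_\rho\rangle)$ is invertible over $\Z$ for a basis $m_i$ of $M'$, i.e. the $u_\rho$ form a $\Z$-basis of $N'$; since the $u_\rho$ are precisely the minimal generators of the full-dimensional cone $C'$, this is the definition of $C'$ (equivalently $C$) being smooth. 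The degenerate case $C = \{0\}$ is vacuous, $\operatorname{Cl}(R_\F) = 0$ and $C$ is both simplicial and smooth.

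The only step with real content is identifying the kernel of $\bigoplus_\rho \Z P_\rho \to \operatorname{Cl}(R'_\F)$ with the monomial divisors: it hinges on the algebraic torus having trivial class group together with the fact that a rational function whose divisor is torus-invariant is a scalar multiple of a character. Everything else is either a direct citation to \cite[Ch.~3--4]{torictome} or a short computation with \eqref{eqn:SES001}, so I do not anticipate a serious obstacle beyond carefully invoking the right references.
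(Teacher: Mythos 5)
Your argument is correct and reconstructs precisely the standard toric divisor theory that the paper outsources to \cite[Prop.~3.3.9, Prop.~4.1.1--4.1.2, Thm.~4.1.3, Prop.~4.2.2, 4.2.6--4.2.7]{torictome} in its footnote: the excision sequence for the open torus orbit plus the fact that units of $\F[M']$ are $\F^\times$ times characters gives \eqref{eqn:SES001}, injectivity of $\phi$ comes from full-dimensionality, and the finiteness/triviality criteria are the expected rank count and $\Z$-invertibility of the ray matrix. The reduction $\operatorname{Cl}(R_\F) \cong \operatorname{Cl}(R'_\F)$ via Nagata's theorem and localization at principal primes likewise matches \cite[Prop.~3.3.9]{torictome} (and the invariance of $\operatorname{Cl}$ under Laurent extension), so this is essentially the same route the cited references take.
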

\noindent By definition, the cone $C  \subseteq N_\R$ is \textbf{simplicial (respectively, smooth)} if $C = \{0\}$ or the primitive ray generators form part of  an $\R$-basis for $N_\R$ (resp., a $\Z$-basis for $N$). We also apply the adjectives simplicial and smooth to the corresponding toric algebra $R_\F$ and the toric $\F$-variety $\operatorname{Spec}(R_\F)$.

\section{Main Results}\label{sec:main-results}

Maintaining all notation conventions from the last section, we now state our main results.

\begin{theorem}\label{thm:ITAU-sandwhiching000}
\textit{
Let $C \subseteq N_\R$ be a full pointed rational polyhedral cone. Let $R_\F = \F [C^\vee \cap M]$ be the associated toric algebra over a field $\F$. Set $D := \max_{m \in \mathcal{B}
} \langle m, v_C \rangle,$ where $\mathcal{B}$ is the minimal generating set for $C^\vee \cap M$ and $v_C \in N$ is the sum of the primitive generators for $C$. Then $$P^{(D(r-1)+1)} \subseteq P^r$$ for all $r>0$, and all monomial primes $P$ in $R_\F$.}  
\end{theorem}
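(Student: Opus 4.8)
The plan is to fix a monomial prime $P = P_F$ attached to a face $F$ of $C$, put $k := D(r-1)+1$, and prove the single containment $P^{(k)} \subseteq P^r$; the chain $P^{(Dr)} \subseteq P^{(D(r-1)+1)}$ asserted in Theorem~\ref{thm:ToricUSTPMonoPrimes001} is then automatic from $Dr \ge k$ and monotonicity of symbolic powers. Since $P$ is torus-invariant, so is each symbolic power $P^{(k)} = P^k (R_\F)_P \cap R_\F$ --- the automorphisms of $R_\F$ coming from the torus preserve $P$, hence $P^k$, hence this intersection --- and a torus-invariant ideal of $R_\F$ is a monomial ideal. So it suffices to prove that $\chi^m \in P^{(k)}$ implies $\chi^m \in P^r$, for $m \in C^\vee \cap M$. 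Write $v_F := \sum_{v \in G_F} v$ and $\mathcal{B}_F := \{b \in \mathcal{B} : \langle b, v_F\rangle > 0\}$; by Lemma~\ref{lem:prime-positivity} together with the fact that $\mathcal{B}$ generates $C^\vee \cap M$, one has $P = (\chi^b : b \in \mathcal{B}_F)$ (any monomial of $P$, expanded over $\mathcal{B}$ and paired with $v_F$, must use some $b \in \mathcal{B}_F$), and therefore $P^k$ is the monomial ideal generated by the $\chi^{b_1 + \cdots + b_k}$ with all $b_i \in \mathcal{B}_F$.

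The argument then rests on two inequalities. First, I claim $\chi^m \in P^{(k)}$ forces $\langle m, v_F\rangle \ge k$: writing $\chi^m = g/s$ with $g = s\chi^m \in P^k$ and $s \in R_\F \setminus P$, expand $s = \sum_\mu s_\mu \chi^\mu$ over monomials; since $s \notin P$ and every $\langle \mu, v_F\rangle \ge 0$, some monomial of $s$ has $s_{\mu_0} \ne 0$ and $\langle \mu_0, v_F\rangle = 0$. As $\mu \mapsto \mu + m$ is injective, $\chi^{\mu_0 + m}$ survives with nonzero coefficient in $s\chi^m = g \in P^k$, so $\chi^{\mu_0 + m} \in P^k$, i.e. $\mu_0 + m = b_1 + \cdots + b_k + c$ for some $b_i \in \mathcal{B}_F$ and $c \in C^\vee \cap M$; pairing with $v_F$ and using $\langle \mu_0, v_F\rangle = 0$, $\langle b_i, v_F\rangle \ge 1$, $\langle c, v_F\rangle \ge 0$ gives $\langle m, v_F\rangle \ge k$. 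Second, I claim $\langle b, v_F\rangle \le D$ for all $b \in \mathcal{B}$: the primitive ray generators of the face $F$ are among those of $C$, so $v_C - v_F$ is a nonnegative sum of generators of $C$ and hence lies in $C$; thus $\langle b, v_C - v_F\rangle \ge 0$ and $\langle b, v_F\rangle \le \langle b, v_C\rangle \le D$.

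To finish, take $\chi^m \in P^{(k)}$ and expand $m = \sum_{b \in \mathcal{B}} a_b\, b$ with $a_b \in \N$. Combining the two inequalities, $D(r-1)+1 = k \le \langle m, v_F\rangle = \sum_{b \in \mathcal{B}_F} a_b \langle b, v_F\rangle \le D\sum_{b \in \mathcal{B}_F} a_b$, so $\sum_{b \in \mathcal{B}_F} a_b > r-1$, i.e. $\ge r$. Pulling out $r$ of these vectors (with multiplicity) as $b_1, \dots, b_r \in \mathcal{B}_F$, the remainder $c := m - (b_1 + \cdots + b_r)$ is still a nonnegative combination of elements of $\mathcal{B}$, so $c \in C^\vee \cap M$ and $\chi^m = \chi^{b_1}\cdots\chi^{b_r}\chi^c \in P^r$, as wanted. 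The combinatorial core here is short; the point requiring care is the first estimate --- correctly reducing a symbolic-power membership to a statement about monomials and about the localization at $P$. One can also organize this by introducing the auxiliary monomial ideals $I_F(E) := (\chi^m : m \in C^\vee \cap M,\ \langle m, v_F\rangle \ge E)$ and proving the sandwich $P^{(k)} \subseteq I_F(k) \subseteq P^{\lceil k/D\rceil} = P^r$; note that $P^{(k)} \ne I_F(k)$ in general, so the first inclusion is genuinely one-directional.
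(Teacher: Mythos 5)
Your proof is correct and takes essentially the same route as the paper: both arguments reduce to showing that a monomial $\chi^m$ can lie in $P_F^{(k)}$ only if $\langle m, v_F \rangle \ge k$, and then converting this bilinear inequality into membership in $P_F^r$ using $D \ge \max_{b \in \mathcal B} \langle b, v_F \rangle$. The paper packages exactly this as the sandwich $P_F^{(E)} \subseteq I_F(E) \subseteq P_F^{\lceil E/D' \rceil} \subseteq P_F^{\lceil E/D \rceil}$ with $I_F(E) := (\chi^m : \langle m, v_F \rangle \ge E)$ and $D' := \max_{b \in \mathcal B} \langle b, v_F \rangle$, which are precisely the auxiliary ideals you mention in your closing remark. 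In one spot you are more careful than the paper: you justify the needed inequality $\langle b, v_F \rangle \le \langle b, v_C \rangle$ by observing that $G_F$ is a subset of the primitive ray generators of $C$, so $v_C - v_F \in C$, whereas the paper asserts $\max_{m \in \mathcal B}\langle m, v_F\rangle \le \max_{m \in \mathcal B}\langle m, v_C\rangle$ without comment.

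The one place your organization is slightly weaker is the reduction to monomials. The paper establishes $P_F^{(E)} \subseteq I_F(E)$ by proving $I_F(E)$ is $P_F$-primary (a purely monomial check, since $I_F(E)$ is monomial by construction), and then invoking that $P_F^{(E)}$ is the smallest $P_F$-primary ideal containing $P_F^E$; it never needs $P_F^{(E)}$ itself to be a monomial ideal. Your argument, by contrast, first asserts that $P^{(k)}$ is a monomial ideal so that membership can be tested on monomials. That assertion is true, but the justification ``the torus preserves $P$, hence $P^k$, hence $P^{(k)}$'' only forces $M$-homogeneity when the torus $(\F^\times)^n$ has enough points, i.e.\ when $\F$ is infinite; the paper explicitly allows arbitrary $\F$. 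To patch this over a finite field, either base-change to $\overline{\F}$ and descend, or cite the standard fact that primary components of an $M$-homogeneous ideal can be chosen $M$-homogeneous when $M$ is torsion-free. (Alternatively, just adopt the paper's route via primaryness of $I_F(E)$, which sidesteps the issue entirely.) Note also that your direct computation with $s\chi^m = g \in P^k$ shows $\langle m, v_F\rangle \ge k$ only for \emph{monomial} elements of $P^{(k)}$; one cannot run the same argument on an arbitrary $f = \sum_m f_m \chi^m \in P^{(k)}$ because of possible cancellation in $sf$, so the monomiality of $P^{(k)}$ (or the primaryness of $I_F(E)$) is genuinely needed and not merely a convenience.
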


\begin{theorem}\label{thm:height-one-primes}
\textit{
With notation as in Theorem \ref{thm:ITAU-sandwhiching000}, we assume further that $C$ is simplicial. Define 
$T := \max  \left\lbrace  \max_{m \in \mathcal{B}} \langle m, v_C \rangle, D \right\rbrace,$ where $D$ is any positive integer such that $D \cdot \operatorname{Cl}(R_\F) = 0$.  
Then 
$$P^{(T(r-1)+1)} \subseteq P^r$$ for all $r>0$, all monomial primes, and all height one primes in $R_\F$. In particular, we can take $D = \# \operatorname{Cl}(R_\F) $. }
\end{theorem}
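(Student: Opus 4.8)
The plan is to combine the monomial-prime containment from Theorem \ref{thm:ITAU-sandwhiching000} with the class-group bound from Lemma \ref{thm: du Val bound 1}, observing that $T$ is chosen to dominate both of the relevant multipliers simultaneously. Concretely, write $D_{\mathrm{mon}} := \max_{m \in \mathcal{B}} \langle m, v_C \rangle$ for the multiplier appearing in Theorem \ref{thm:ITAU-sandwhiching000}, and let $D$ be any positive integer annihilating $\operatorname{Cl}(R_\F)$; then $T = \max\{D_{\mathrm{mon}}, D\}$. The point is that once a containment of the shape $P^{(a(r-1)+1)} \subseteq P^r$ holds for some multiplier $a$, it holds with $a$ replaced by any $a' \ge a$: indeed $a'(r-1)+1 \ge a(r-1)+1$, so $P^{(a'(r-1)+1)} \subseteq P^{(a(r-1)+1)} \subseteq P^r$, using that symbolic powers are nested in the reverse order of the exponent. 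So the first step is to record this monotonicity remark.

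Next I would split into the two cases. If $P$ is a monomial prime, Theorem \ref{thm:ITAU-sandwhiching000} gives $P^{(D_{\mathrm{mon}}(r-1)+1)} \subseteq P^r$ for all $r>0$, and since $T \ge D_{\mathrm{mon}}$ the monotonicity remark upgrades this to $P^{(T(r-1)+1)} \subseteq P^r$. If instead $P$ has height one, then since $R_\F$ is a Noetherian normal domain, Lemma \ref{thm: du Val bound 1} applies: because $D \cdot \operatorname{Cl}(R_\F) = 0$, we get $P^{(D(r-1)+1)} \subseteq P^r$ for all $r>0$, and since $T \ge D$ the monotonicity remark again upgrades this to $P^{(T(r-1)+1)} \subseteq P^r$. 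This covers every $P$ in the stated list.

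For the final sentence, I would invoke Theorem \ref{thm: exact sequence}: the simpliciality hypothesis on $C$ guarantees exactly that $\operatorname{Cl}(R_\F)$ is finite abelian, so $\# \operatorname{Cl}(R_\F)$ is a well-defined positive integer, and by Lagrange's theorem (the order of a finite group annihilates it) we may take $D = \# \operatorname{Cl}(R_\F)$ as a legitimate choice in the definition of $T$. I do not anticipate a genuine obstacle here; the only point requiring a little care is making sure the monotonicity of the containment in the symbolic exponent is stated cleanly, since that is the mechanism by which a single multiplier $T$ is made to serve both families of primes at once. Everything else is a direct citation of Theorem \ref{thm:ITAU-sandwhiching000}, Lemma \ref{thm: du Val bound 1}, and Theorem \ref{thm: exact sequence}.
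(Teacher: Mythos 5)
Your proposal matches the paper's proof in outline: combine Theorem \ref{thm:ITAU-sandwhiching000} for monomial primes with Lemma \ref{thm: du Val bound 1} for height one primes, using the monotonicity of the containments in the multiplier to make the single value $T = \max\{D_{\mathrm{mon}}, D\}$ serve both families at once. The one place where you are slightly too quick is the final sentence justifying $D = \#\operatorname{Cl}(R_\F)$: Theorem \ref{thm: exact sequence} is stated over an algebraically closed field (that hypothesis is set in the paragraph immediately preceding it), so to conclude that $\operatorname{Cl}(R_\F)$ is finite over an \emph{arbitrary} $\F$ one must first pass through Lemma \ref{lem:base-change-irrelevant}, which gives $\operatorname{Cl}(R_\F) \cong \operatorname{Cl}(R_{\overline{\F}})$. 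The paper cites exactly this pair of results in its one-line proof; your argument is otherwise correct, and the omission is a small patch rather than a flaw in the structure.
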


\begin{remark}\label{rem:smooth-cones} 
If the cone $C$ in Theorem \ref{thm:height-one-primes} is smooth, then $T = 1$ and $P^{(r)} = P^r$ for all $r>0$, all monomial primes, and all height one primes in $R_\F$.  
As $C$ is smooth, $C$ and $C^\vee$ are generated by a $\Z$-basis for $N$ and the dual basis for $M$, respectively.  Also, $\# \operatorname{Cl} (R_\F) = \# \operatorname{Cl}  (R_{\overline{\F}}) = 1$.  
Note that in general, this means our multiplier $T$ will not confirm uniform symbolic topologies for all primes $P$ in a toric algebra. For example, even in a polynomial ring of dimension three, there are height two primes for which $P^{(r)} \neq P^r$ for some $r \ge 2$; \cite[p.2 of Intro]{5authorSymbolicSurvey} gives an example. 
\end{remark}

\begin{remark}\label{rem:surface-case}
Two-dimensional toric algebras are always simplicial with cyclic class class group. In this case, the conclusion of  
Theorem \ref{thm:height-one-primes} holds using the multiplier $\# \operatorname{Cl}  (R_{\F})$. 
This multiplier is sharp by Proposition \ref{prop:sharpness-in-height-one}. 
\end{remark}

\begin{rem}\label{rem:neither-full-nor-pointed} 
Theorems \ref{thm:ITAU-sandwhiching000}-\ref{thm:height-one-primes} can be adapted to the non-full case by replacing $R_\F$ with $R_\F'$ as in Proposition \ref{prop: proof reduction}, and  applying \cite[Prop.~2.6]{Walker003} to the faithfully flat map $\varphi$ from Proposition \ref{prop: proof reduction}.  
\end{rem}

\begin{proof}[Proof of Theorem \ref{thm:ITAU-sandwhiching000}] 
We may fix a face $F \neq \{0\}$ of $C$, and $P = P_F$ the corresponding  monomial prime in $R = R_\F$. 
Per Lemma \ref{lem:prime-positivity} \eqref{eqn:mono-prime-defn},  
a monomial $\chi^m \in P = P_F$ if and only if $\left\langle m,  v_F \right\rangle \in  \Z_{>0},$ where $v_F \in F \cap N$ is the sum of the primitive generators for $F$.   
\begin{lemma}\label{lem:monomial-sandwiching01} \textit{For each integer $E \ge 1$,  $P_F^{(E)} \subseteq I_F (E)$, where $I_F (E) := \left( \chi^{m} \colon   \left\langle m , v_F  \right\rangle  \ge E \right) R.$}  
\end{lemma}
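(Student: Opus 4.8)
The goal is to show that the symbolic power $P_F^{(E)}$ sits inside the monomial ideal $I_F(E) = (\chi^m : \langle m, v_F\rangle \ge E)R$. The plan is to use the definition of the symbolic power as a localization-contraction: $P_F^{(E)} = P_F^E R_{P_F} \cap R$, or equivalently $P_F^{(E)} = \bigcap \{ Q\text{-primary components relevant to } P_F \}$. Since all the ideals in sight are monomial (torus-invariant), I would first reduce to checking the containment on monomials: a monomial ideal's symbolic power is again monomial (the localization at the monomial prime $P_F$ and contraction back preserve the $M$-grading), so it suffices to show that every monomial $\chi^m \in P_F^{(E)}$ satisfies $\langle m, v_F\rangle \ge E$.

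\emph{Key steps.} First, I would record the valuation-theoretic meaning of membership in $P_F^{(E)}$. The prime $P_F$ has height $\dim F$, and $R_{P_F}$ is a regular (in fact the localization picks out exactly the ray behavior transverse to $F$); more concretely, I would use that $\chi^m \in P_F^{(E)}$ iff $\chi^m \in P_F^E R_{P_F}$, and then bound the $P_F$-adic order of $\chi^m$ from above using a single well-chosen discrete valuation. The natural choice is the valuation $v \mapsto \langle m, u_\rho\rangle$ attached to a ray $\rho$ of $F$, or better, the order function $m \mapsto \langle m, v_F\rangle$ itself. The crux is the inequality: if $\chi^m \in P_F^E R_{P_F}$, then $\langle m, v_F\rangle \ge E$. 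To see this, observe that $P_F R_{P_F}$ is generated by monomials $\chi^{m'}$ with $\langle m', v_F\rangle \ge 1$ (Lemma~\ref{lem:prime-positivity}), together with units of $R_{P_F}$ — namely $\chi^{m'}$ with $\langle m', v_F\rangle = 0$, which become invertible after localizing at $P_F$. So every element of $P_F^E R_{P_F}$ is an $R_{P_F}$-combination of products $\chi^{m_1}\cdots \chi^{m_E}$ with each $\langle m_i, v_F\rangle \ge 1$; since $\langle \cdot, v_F\rangle$ is additive and takes nonnegative values on all of $C^\vee \cap M$, any monomial appearing in such an expression has $\langle \cdot, v_F\rangle \ge E$. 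Because $R$ has a monomial $\F$-basis, reading off the $M$-homogeneous component of degree $m$ shows $\langle m, v_F\rangle \ge E$, hence $\chi^m \in I_F(E)$.

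\emph{The remaining subtlety.} The one point that needs care is the claim that $P_F^{(E)}$ is monomial and that it is literally computed by the contraction $P_F^E R_{P_F} \cap R$ rather than some associated-primes intersection that could be larger. Here I would invoke that $P_F$ is the unique minimal (indeed associated) prime relevant to the symbolic power — $P_F^{(E)}$ is by definition the $P_F$-primary component of $P_F^E$ — so $P_F^{(E)} = P_F^E R_{P_F} \cap R$ holds on the nose; and contraction along the flat map $R \to R_{P_F}$ of the ideal $P_F^E R_{P_F}$, which is homogeneous for the $M$-grading extended to $R_{P_F}$, produces an $M$-homogeneous (hence monomial) ideal of $R$. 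I expect this bookkeeping — making sure localization at a monomial prime interacts cleanly with the grading, and that units in $R_{P_F}$ are exactly the monomials $\chi^{m'}$ with $\langle m', v_F\rangle = 0$ up to scalars — to be the main obstacle, though it is really a routine verification using that $P_F$ is generated by the monomials with positive $v_F$-pairing. The additivity and nonnegativity of $\langle\,\cdot\,, v_F\rangle$ on $C^\vee \cap M$, already implicit in the proof of Lemma~\ref{lem:prime-positivity}, then close the argument with no further work.
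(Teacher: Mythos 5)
Your proposal is correct in substance but takes a genuinely different logical route from the paper. You prove the containment by arguing that $P_F^{(E)} = P_F^E R_{P_F} \cap R$ is itself a monomial ideal, and then directly bounding $\langle m, v_F\rangle$ from below for each monomial $\chi^m$ in that contraction. The paper never addresses whether $P_F^{(E)}$ is monomial: it instead verifies that $I_F(E)$ is a $P_F$-primary ideal containing $P_F^E$, and the inclusion $P_F^{(E)} \subseteq I_F(E)$ then follows immediately because $P_F^{(E)}$ is by definition the smallest $P_F$-primary ideal containing $P_F^E$. This completely sidesteps the ``remaining subtlety'' you flag. Interestingly, the computation underlying both arguments is the same --- if $\chi^{m+q} \in I_F(E)$ and $\langle q, v_F\rangle = 0$, then $\langle m, v_F\rangle = \langle m+q, v_F\rangle \ge E$ by additivity --- but the paper deploys it to check that $I_F(E)$ passes the primary test, whereas you deploy it to check that contraction from $R_{P_F}$ does not pick up unexpected monomials. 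One minor imprecision in your sketch: $R_{P_F}$ is not $M$-graded (you are inverting non-homogeneous elements of $R - P_F$), so the phrase ``homogeneous for the $M$-grading extended to $R_{P_F}$'' does not parse literally. The standard repair is to pass to the homogeneous localization $T^{-1}R$ at the multiplicative set $T$ of monomials lying outside $P_F$, and observe, by exactly the degree-component argument you gesture at (expand $g \in R - P_F$ into monomials and isolate a term $\chi^{q_0}$ with $\langle q_0, v_F\rangle = 0$), that $P_F^E R_{P_F} \cap R = P_F^E \, T^{-1}R \cap R$; since $T^{-1}R$ is honestly $M$-graded, this contraction is monomial. With that repair your proof is complete, though the paper's packaging is tighter.
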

\noindent First, $I_F(E)$ is $P_F$-primary for all $E \ge 1$, 
 i.e., if $s f \in I_F (E)$ for some $s \in R - P_F$, then $f \in I_F (E)$. As $I_F(E)$ is monomial, we may test this by fixing $\chi^m \in I_F (E) R_{P_F} \cap R$ and $\chi^q \in R-P_F$ such that $\chi^m \cdot \chi^q = \chi^{m+q} \in I_F (E)$: $\left\langle q, v_F  \right\rangle = 0, \mbox{ while }  E \le \left\langle m + q, v_F  \right\rangle = \left\langle m , v_F  \right\rangle + \left\langle q, v_F  \right\rangle = \left\langle m, v_F \right\rangle,$ so $\chi^m \in I_F (E)$. 
  Thus all $I_F(E)$ are $P_F$-primary, and certainly $P_F^E \subseteq I_F (E)$. Thus $P_F^{(E)} \subseteq I_F (E)$, being the smallest $P_F$-primary ideal containing $P_F^E$, proving the lemma. 

Certainly, $1 \le \max_{m \in \mathcal{B}} \left\langle m, v_F \right\rangle \le  \max_{m \in \mathcal{B}} \left\langle m, v_C \right\rangle$, 
for $\mathcal{B}$ and $D$ as above. 
The conclusion of the theorem follows by \cite[Lem.~3.3]{Walker002}, once we verify the left-hand containments in the next  

\begin{lemma}\label{lem:monomial-sandwiching02} \textit{For each integer $E \ge 1$, $I_F (E) \subseteq P_F^{\lceil E/D' \rceil} \subseteq P_F^{\lceil E/ D \rceil }$ where $D' = \max_{m \in \mathcal{B}} \left\langle m, v_F \right\rangle.$}  
\end{lemma}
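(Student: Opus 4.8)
The plan is to use that $I_F(E)$ is a monomial ideal, so it suffices to show that every monomial generator $\chi^m$ with $\langle m, v_F\rangle \ge E$ already lies in $P_F^{\lceil E/D'\rceil}$. Since $\langle m, v_F\rangle \ge E \ge 1$ forces $m \neq 0$, and $\mathcal{B}$ generates the semigroup $C^\vee \cap M$, I can write $m = w_1 + \cdots + w_s$ with each $w_i \in \mathcal{B}$ (repetitions allowed). Bilinearity then gives $\langle m, v_F\rangle = \sum_{i=1}^s \langle w_i, v_F\rangle$, a sum of nonnegative integers, each of which is at most $D' = \max_{w \in \mathcal{B}}\langle w, v_F\rangle$.

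Next, let $k$ be the number of indices $i$ with $\langle w_i, v_F\rangle > 0$. On the one hand, $E \le \langle m, v_F\rangle \le k D'$, so $k \ge E/D'$, hence $k \ge \lceil E/D'\rceil$ because $k$ is an integer. On the other hand, Lemma \ref{lem:prime-positivity} says $\chi^{w_i} \in P_F$ precisely when $\langle w_i, v_F\rangle > 0$; writing $\chi^m = \prod_{i=1}^s \chi^{w_i}$ and collecting the $k$ factors that lie in $P_F$ shows $\chi^m \in P_F^k \subseteq P_F^{\lceil E/D'\rceil}$. This establishes the left-hand containment.

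For the right-hand containment $P_F^{\lceil E/D'\rceil} \subseteq P_F^{\lceil E/D\rceil}$, I will verify $D' \le D$. The rays of the face $F$ are among the rays of $C$, so the set $G_F$ of primitive ray generators of $F$ is contained in the set $G_C$ of primitive ray generators of $C$; hence $v_C - v_F = \sum_{v \in G_C \setminus G_F} v \in C \cap N$. Consequently $\langle w, v_C - v_F\rangle \ge 0$ for every $w \in C^\vee \cap M$, so $\langle w, v_F\rangle \le \langle w, v_C\rangle$ for all $w \in \mathcal{B}$, giving $D' \le D$. Then $\lceil E/D'\rceil \ge \lceil E/D\rceil$, so the larger power of $P_F$ sits inside the smaller one, completing the argument.

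I expect essentially no serious obstacle here: the argument is purely combinatorial once one reduces to a single monomial generator and decomposes it over the Hilbert basis. The only step demanding care is the bookkeeping $D' \le D$, which hinges on the elementary but important observation that faces of $C$ have their rays among the rays of $C$; beyond that, everything follows directly from bilinearity of $\langle\cdot,\cdot\rangle$ and Lemma \ref{lem:prime-positivity}.
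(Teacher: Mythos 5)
Your proof is correct and follows essentially the same route as the paper: decompose the exponent vector of a monomial in $I_F(E)$ over the Hilbert basis $\mathcal{B}$, use bilinearity and Lemma \ref{lem:prime-positivity} to count how many summands pair positively with $v_F$, and bound that count below by $\lceil E/D'\rceil$ via the ceiling trick. The one modest addition is that you explicitly justify $D' \le D$ by observing $G_F \subseteq G_C$ so that $v_C - v_F \in C \cap N$, a step the paper records as ``Certainly, $\ldots$'' without comment.
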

\noindent  Fix any monomial $\chi^\ell \in I_F (E)$, say $\ell = \sum_{m \in \mathcal{B}} a_m \cdot   m$ with $a_m \in \Z_{\ge 0}$. Let $S \subseteq \mathcal
B$ consist of those $m \in \mathcal{B}$ such that the monomials $\chi^m$ form a minimal generating set for $P$.   
By linearity of $\langle \bullet , v_F \rangle$,  
$$E \le  \left\langle \ell , v_F  \right\rangle = \sum_{m \in  \mathcal{B} } a_m \left\langle m, v_F  \right\rangle = \sum_{m \in  S}  a_m \left\langle m, v_F  \right\rangle  \le \sum_{m \in S} a_m \cdot D'  \Longrightarrow \sum_{m \in S} a_m \ge \lceil E / D' \rceil. $$ 
Thus $\chi^m \in P_F^{\sum_{m \in S } a_m } \subseteq P_F^{\lceil E / D' \rceil}$. 
Being a monomial ideal, it follows that $I_F (E) \subseteq P_F^{\lceil E/D' \rceil}$. 
\end{proof}

\begin{remark} In passing, we invite the reader to compare the ideals $I_F (\bullet)$ in Lemma \ref{lem:monomial-sandwiching01} with Bruns and Gubeladze's terminology and description \cite[Ch.~4, p.~149]{BG-Polytopes-Ktheory} for the symbolic powers of the height one monomials primes in terms of  a full pointed cone. 
Lemma \ref{lem:monomial-sandwiching01} works in any height.  
\end{remark}

\begin{lemma}\label{lem:base-change-irrelevant}
\textit{With notation as in Theorem \ref{thm:ITAU-sandwhiching000}, the class groups 
$\operatorname{Cl}(R_\F) \cong \operatorname{Cl}(R_{\overline{\F}})$ are isomorphic.}
\end{lemma}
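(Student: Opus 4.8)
The plan is to show that $\operatorname{Cl}(R_\F)$ and $\operatorname{Cl}(R_{\overline{\F}})$ are both computed by the \emph{same} purely combinatorial presentation: the cokernel of the divisor map $\phi \colon M \to \bigoplus_{\rho \in \Sigma(1)} \Z P_\rho$, $\phi(m) = \operatorname{div}(\chi^m) = \sum_{\rho} \langle m, u_\rho \rangle P_\rho$. Over $\overline{\F}$ this is exactly the content of the short exact sequence \eqref{eqn:SES001} from Theorem \ref{thm: exact sequence}; note that since $C$ is full and pointed we have $N' = N$, $M' = M$, and the Laurent factor in Proposition \ref{prop: proof reduction} is trivial, so there $\operatorname{Cl}(R_{\overline{\F}}) \cong \operatorname{coker}(\phi)$. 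It therefore suffices to rederive the \emph{same} sequence over an arbitrary field $\F$, observing that the argument of \cite[Ch.~3--4]{torictome} never uses algebraic closure.

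Concretely I would proceed as follows. Because $C$ is pointed, $C^\vee$ is full-dimensional, so the Hilbert basis generates $M$ as a group and $m_0 := \sum_{m \in \mathcal{B}} m$ lies in the interior of $C^\vee$; hence the localization $(R_\F)_{\chi^{m_0}}$ equals the Laurent polynomial ring $\F[M]$, which is a UFD, so $\operatorname{Cl}(\F[M]) = 0$. The torus $\operatorname{Spec}(\F[M])$ is thus the distinguished open subset $D(\chi^{m_0}) \subseteq \operatorname{Spec}(R_\F)$, and since $\operatorname{div}(\chi^{m_0}) = \sum_\rho \langle m_0, u_\rho \rangle P_\rho$ has all coefficients positive, its complement $V(\chi^{m_0})$ equals the closed set $\bigcup_{\rho \in \Sigma(1)} V(P_\rho)$. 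Next I would invoke the excision sequence for divisor class groups of a Noetherian normal domain,
\[
\bigoplus_{\rho \in \Sigma(1)} \Z P_\rho \longrightarrow \operatorname{Cl}(R_\F) \longrightarrow \operatorname{Cl}(\F[M]) \longrightarrow 0,
\]
whose exactness (with $\operatorname{Cl}(\F[M]) = 0$) makes the first map surjective with kernel the subgroup of principal divisors $\operatorname{div}(f)$ supported on $\bigcup_\rho V(P_\rho)$. Finally, such an $f$ must be a unit on the torus $\operatorname{Spec}(\F[M])$, so $f = c\,\chi^m$ with $c \in \F^\times$, $m \in M$; thus this kernel is precisely $\operatorname{im}(\phi)$, giving $\operatorname{Cl}(R_\F) \cong \operatorname{coker}(\phi) \cong \operatorname{Cl}(R_{\overline{\F}})$. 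One then checks the isomorphism is the natural one induced on class groups by the faithfully flat map $R_\F \hookrightarrow R_{\overline{\F}} = R_\F \otimes_\F \overline{\F}$.

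The only point requiring care is verifying over a non-algebraically-closed $\F$ that the complement of the torus is exactly $\bigcup_\rho V(P_\rho)$ and that the relevant principal divisors are exactly the $\operatorname{div}(\chi^m)$. But every ingredient above — the description of monomial primes from Section \ref{sec:Toric-Alg-Prelims00}, normality of $R_\F$, and the excision sequence — is combinatorial or a general fact about normal domains, so the arguments of \cite[Ch.~3--4]{torictome}, although stated there over $\C$, go through verbatim. Alternatively one could bypass this via faithfully flat descent for Weil divisors along the geometrically normal extension $R_\F \hookrightarrow R_{\overline{\F}}$, but I expect the combinatorial route to be the cleaner and more self-contained one.
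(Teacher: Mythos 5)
Your proof is correct but takes a genuinely different route from the paper. The paper's proof is a short two-step base-change argument: since $C$ is full and pointed, $R_\F$ is $\N$-graded, so Fossum's theorem \cite[Cor.~10.5]{fossum} gives an injection $\operatorname{Cl}(R_\F) \hookrightarrow \operatorname{Cl}(R_{\overline{\F}})$ induced by the flat extension; surjectivity then follows because the classes of the monomial height-one primes $P_\rho$ (which correspond bijectively under extension) already generate $\operatorname{Cl}(R_{\overline{\F}})$ by Theorem \ref{thm: exact sequence}. You instead rederive the presentation $\operatorname{Cl}(R_\F) \cong \operatorname{coker}(\phi)$ directly over an arbitrary field, via Nagata's excision sequence for the open torus $D(\chi^{m_0}) = \operatorname{Spec}(\F[M])$ together with the identification of the units of $\F[M]$ as $\F^\times \cdot \{\chi^m\}_{m \in M}$. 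This buys a fully self-contained, field-agnostic derivation (in effect re-proving Theorem \ref{thm: exact sequence} without the convention of an algebraically closed base field), at the cost of being longer and of needing a small lemma you rightly flag: that the height-one primes of $R_\F$ containing $\chi^{m_0}$ are precisely the monomial primes $P_\rho$. That step is fine — $(\chi^{m_0})$ is a principal ideal in a normal $M$-graded domain, so its minimal primes are graded (hence monomial) and of height one — but it should be stated rather than waved through; the paper's route to the injection via Fossum avoids having to verify such graded-minimal-prime facts over a general $\F$. Both approaches are valid and reach the same isomorphism.
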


\begin{proof} Since $C$ is a full pointed cone in Theorem \ref{thm:ITAU-sandwhiching000}, $R_\F$ admits an $\N$-grading; see the passage above Remark \ref{rem:strong-convexity-stipulation}. We may then cite Fossum \cite[Cor.~10.5 on p.43]{fossum} to conclude that up to isomorphism, $\operatorname{Cl}(R_\F) \subseteq \operatorname{Cl}(R_{\overline{\F}})$ as a subgroup. This is an equality for toric rings because the divisor classes of height one monomial primes belong to both groups and generate the latter by Theorem \ref{thm: exact sequence}. 
\end{proof}

\begin{proof}[Proof of Theorem \ref{thm:height-one-primes}] Since $C$ is simplicial, $\# \operatorname{Cl}(R_\F)$ is finite by Lemma \ref{lem:base-change-irrelevant} and 
Theorem \ref{thm: exact sequence}. Now we simply combine Theorem \ref{thm:ITAU-sandwhiching000} with Lemma \ref{thm: du Val bound 1}, and take the maximum of the values.  
\end{proof}

\begin{exm}\label{exm:Hypersurf-000}
Fix an arbitrary ground field $\F$ and integers $n \ge 2$ and $E \ge 2$. Let $$R = \frac{\F [x_1, \ldots, x_n , z]}{(z^E -  x_1 \cdots x_n)}.$$ Then $P^{(T (r-1) + 1)} \subseteq P^r$ for all $r>0$, all monomials primes, and all height one primes in $R$, where $T = \max \{n, E \}$. Indeed, $R$ is a toric algebra arising from the simplical full pointed cone $C \subseteq \RR^n$ spanned by $$\{e_n, \mbox{ } E \cdot e_i + e_n \colon i = 1, \ldots, n-1\} \subseteq \Z^n,$$ where $e_1, \ldots, e_n$ denote the standard basis vectors in $\RR^n$. We may apply Theorem \ref{thm:height-one-primes}, noting that: \begin{itemize}
\item In the notation of Theorem \ref{thm:ITAU-sandwhiching000}, $\mathcal{B} = \{e_1,  \ldots, e_{n-1}, e_n,  E \cdot e_n - e_1 -  \cdots -  e_{n-1}\} \subseteq \Z^n$ and the vector $v_C = n \cdot e_n + E \cdot (e_1 + \cdots + e_{n-1}) \in \Z^n$. 
\item $\operatorname{Cl}(R)  \cong (\Z  / E  \Z)^{n-1},$ so $E \cdot \operatorname{Cl}(R) = 0$; see  \cite[Sec.~4]{Walker002} for this class group  computation. 
\end{itemize}
\end{exm}

\subsubsection{Von Korff's Toric F-Signature Formula}
Now fix a perfect field $\KK$ of positive characteristic $p$. Given an F-finite $\N$-graded domain $R$ of finite type over $\KK$, for each integer $e \ge 0$, we have an $R$-module isomorphism 
$R^{1/p^e} \cong R^{a_e} \oplus M$ where $M$ has no free summand, and the integer $a_e \le p^{e d}$ where $d = \dim R$. By definition, the \textbf{F-signature} of $R$ is (see \cite{HL02} and \cite{KT11}) 
$$s(R) := \limsup_{e \to \infty} \frac{a_e}{p^{ed}}  =  \lim_{e \to \infty} \frac{a_e}{p^{ed}},  \quad 0 \le s(R) \le 1. $$ 
 The F-signature has ties to measuring F-singularities: for instance, $s(R)$ is positive if and only if $R$ is strongly F-regular \cite{AL03}, and $s(R) = 1$ if and only if $R$ is regular \cite[Thm.~4.16]{KT11}. 

Over the perfect field $\KK$, 
any normal toric ring is strongly F-regular and its F-signature is rational \cite{Sin05}. 
We now state Von Korff's result 
\cite[Thm.~3.2.3]{MVKorff}; see also Watanabe-Yoshida \cite[Thm.~5.1]{WY04} and 
Yao \cite[Rem.~2.3(4)]{Yao06}:  

\begin{theorem}[cf., Von Korff  {\cite[Thm.~3.2.3]{MVKorff}}]\label{thm: WYVK Poly FSig}
\textit{
With notation as in Proposition \ref{prop: proof reduction}, we define a convex polytope, 
$P_{C'} := \{w \in M'_\R \colon 0 \le  \langle w, v \rangle < 1 , \forall v \in G\} \subsetneqq (C')^\vee,$ 
where $G$ is the set of primitive generators of $C' \neq \{0\}$. Then over any perfect field $\KK$ of positive characteristic,  
the F-signature $s(R_\KK) = s(R'_\KK) = \operatorname{Vol}(P_{C'}) \in \Q_{>0}$, where the volume form $\operatorname{Vol}$ on $M'_\RR$ is chosen so that a hypercube spanned by primitive generators of $M'$ has volume one.}  \end{theorem}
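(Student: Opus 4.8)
\emph{Proof strategy.} The plan is to run the classical monomial analysis of Frobenius pushforwards for toric rings, in the present notation. First I would reduce to the full-dimensional case: by Proposition~\ref{prop: proof reduction}, $R_\KK \cong R'_\KK \otimes_\KK L$ with $L$ a Laurent polynomial ring over $\KK$, and since $L$ is regular and essentially of finite type, tensoring with $L$ multiplies the F-signature by $s(L)=1$; hence $s(R_\KK)=s(R'_\KK)$ and it suffices to treat $R := R'_\KK = \KK[(C')^\vee\cap M']$ with $C'$ full-dimensional. Write $d:=\dim R$, $q:=p^e$, and let $v_1,\dots,v_s$ be the primitive generators of $C'$, so that $(C')^\vee=\{w : \langle w,v_i\rangle\ge 0\ \forall i\}$ and $P_{C'}=\{w : 0\le\langle w,v_i\rangle<1\ \forall i\}$.

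Next I would identify the Frobenius pushforward. Since $\KK$ is perfect, $\KK^{1/q}=\KK$, so $R^{1/q}=\KK[(C')^\vee\cap\tfrac1q M']$ is a module-finite extension of $R$ (toric rings over an F-finite field are F-finite). Grading by the finite group $\tfrac1q M'/M'$ gives an $R$-module decomposition $R^{1/q}=\bigoplus_{\lambda}M_\lambda$ with $M_\lambda:=\bigoplus_{m\in(C')^\vee\cap(\lambda+M')}\KK\chi^m$, where each $M_\lambda$ is a rank-one torus-invariant reflexive $R$-module, in particular indecomposable. By graded Krull--Schmidt (valid since $R$ is $\N$-graded with $R_0=\KK$), $a_e=\#\{\lambda\in\tfrac1q M'/M' : M_\lambda\cong R\}$, because a copy of $R$ in $R^{1/q}$ must come from one of the $M_\lambda$, and such a module has a free summand exactly when it is itself free.

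The combinatorial heart — and the main obstacle — is the freeness criterion: $M_\lambda\cong R$ if and only if the coset $\lambda+M'$ meets the half-open polytope $P_{C'}$, in which case the intersection is a single point $u$ and $M_\lambda=\chi^u R$. The ``if'' direction (and the single-point statement) is routine: if $\langle u,v_i\rangle\in[0,1)$ for all $i$, then for any $0\ne m\in (C')^\vee\cap M'$ there is an $i$ with $\langle m,v_i\rangle\ge 1$ (the $v_i$ span $M'_\R$), whence $\langle u-m,v_i\rangle<0$ and $u-m\notin(C')^\vee$; thus $\chi^u$ is the unique $\le_{(C')^\vee}$-minimum of $(C')^\vee\cap(\lambda+M')$, so $M_\lambda$ is this principal monomial ideal. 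The ``only if'' direction is where a non-simplicial $C'$ defeats the naive argument: one must show that when $(C')^\vee\cap(\lambda+M')$ has a (necessarily unique) minimum $u_0$, it already satisfies $\langle u_0,v_i\rangle<1$ for \emph{every} facet normal $v_i$ — the point being that otherwise the facet structure of $(C')^\vee$ forces a second minimal element, contradicting principality. This is exactly the combinatorial content of \cite[Thm.~3.2.3]{MVKorff} (and of Watanabe--Yoshida \cite[Thm.~5.1]{WY04} in the $\operatorname{Cl}$-finite case), and I would import it. Granting it, $a_e=\#\bigl(\tfrac1q M'\cap P_{C'}\bigr)$ exactly.

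Finally I would conclude by a lattice-point asymptotic: with the volume form normalized so that a fundamental domain of $M'$ has volume one, $P_{C'}$ is a bounded rational polytope with nonempty interior, so $\#\bigl(\tfrac1q M'\cap P_{C'}\bigr)=q^d\operatorname{Vol}(P_{C'})+O(q^{d-1})$ as $e\to\infty$ (its boundary has measure zero). Dividing by $p^{ed}=q^d$ and passing to the limit yields $s(R)=\operatorname{Vol}(P_{C'})$, which is rational because $P_{C'}$ is a rational polytope and positive because $P_{C'}$ has nonempty interior (this last point also re-derives strong F-regularity). In summary, Steps~1, 2, and~4 are formal once set up; the work lies entirely in the freeness criterion of Step~3, whose easy half is a direct computation but whose reverse half requires a genuine understanding of the minimal generators of torus-invariant divisorial ideals over an arbitrary, possibly non-simplicial, cone.
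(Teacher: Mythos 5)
The paper does not actually prove Theorem~\ref{thm: WYVK Poly FSig} --- it states it as a cited result, attributing it to Von Korff \cite[Thm.~3.2.3]{MVKorff} and pointing to Watanabe--Yoshida \cite[Thm.~5.1]{WY04} and Yao \cite[Rem.~2.3(4)]{Yao06}. So there is no in-paper argument to compare against; what you have written is a sketch of the standard proof in those references (reduce to the full-dimensional case, decompose $R^{1/q}$ by the finite group $\tfrac1q M'/M'$ into torus-invariant rank-one reflexives $M_\lambda$, characterize freeness of $M_\lambda$ combinatorially via $P_{C'}$, then conclude by a lattice-point count). The overall architecture is right, you correctly note that the reduction and the Ehrhart-type asymptotic are formal, and you honestly isolate the genuine content --- the ``only if'' half of the freeness criterion, which is nontrivial exactly when $C'$ is not simplicial --- as the step you would import from Von Korff. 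That is a fair division of labor given that the paper itself cites rather than reproves this.

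One local slip worth fixing in the ``if'' direction: your written argument shows that no $m'\in(C')^\vee\cap(\lambda+M')$ lies strictly \emph{below} $u$ in the $(C')^\vee$-order (i.e.\ $u$ is \emph{minimal}), but what you need for $M_\lambda=\chi^u R$ is that $u$ is the \emph{minimum}, i.e.\ $m'-u\in(C')^\vee$ for every such $m'$. The correct argument is in the same spirit and just as short: for any $m'\in(C')^\vee\cap(\lambda+M')$ one has $m'-u\in M'$, hence $\langle m'-u,v_i\rangle\in\Z$; moreover $\langle m'-u,v_i\rangle=\langle m',v_i\rangle-\langle u,v_i\rangle>0-1=-1$, so $\langle m'-u,v_i\rangle\ge 0$ for every $i$, i.e.\ $m'-u\in(C')^\vee$. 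With that substitution the ``if'' direction (and the single-point claim) is complete; the rest of your outline stands.
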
  

\begin{remark}\label{exm:simplicial-Fsig} 
When the cone $C'$ in Theorem \ref{thm: WYVK Poly FSig} is simplicial, $s(R_\KK') = 1 /D$ where the integer $D = \# \operatorname{Cl}(R_{\overline{\KK}}') = \# \operatorname{Cl}(R_{\KK}')$; cf.,   \cite[Cor.~3.2]{Walker001}. The latter equality holds by Lemma \ref{lem:base-change-irrelevant}. 
\end{remark} 

\noindent Given Theorem \ref{thm: WYVK Poly FSig}, 
 we could opt to replace the invariant $T$ from Theorem \ref{thm:height-one-primes} with the possibly larger invariant $U = \operatorname{lcm} \{ \max_{m \in \mathcal{B} } \langle m , v_C \rangle , \# \operatorname{Cl} (R_\KK) \} \in \Z_{>0}$, the least common multiple considered in Theorem \ref{thm:VarPi-FSig-Bound01}. 
 The latter is an integer multiple of the reciprocal $1 / s(R_\KK)= \#\operatorname{Cl}(R_\KK)$ of the F-signature of $R_\KK$ that controls the asymptotic growth of symbolic powers.

\section{Examples of (Non-)Sharp Multipliers; Segre-Veronese algebras}\label{sec:Examples} 

To start, we deduce a result that occasionally provides sharp multipliers in the toric setting. 
\begin{proposition}\label{prop:sharpness-in-height-one}
\textit{With notation as in Theorem \ref{thm:height-one-primes}, we assume $C$ is a simplicial full pointed rational polyhedral cone. We now set $B  := \max_{w \in \mathcal{PG}} \left\langle w, v_C \right\rangle$ where $\mathcal{PG} \subseteq \mathcal{B}$ consists of the primitive generators of $C^\vee$. There exists a monomial prime $P$ in $R = R_\F$ of height one
such that:
\begin{enumerate}
\item $P^{(B(r-1))} \not\subseteq P^r$ for some $r \ge 2$; 
\item There is no positive integer $D' < B$ such that $P^{(D'(r-1)+1)} \subseteq P^r$ for all $r > 0$.
\end{enumerate}
}
\end{proposition}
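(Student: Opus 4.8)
The plan is to exhibit an explicit simplicial full pointed cone $C$ (or a whole family, to cover all situations) whose dual's Hilbert basis $\mathcal{B}$ contains a primitive generator $w$ of $C^\vee$ achieving the maximum value $B = \langle w, v_C\rangle$, and then to pin down, for the height one monomial prime $P = P_\rho$ attached to the ray $\rho = \mathbb{R}_{\ge 0} w^*$ dual to $w$, the precise symbolic powers $P^{(t)}$. The key combinatorial input is that because $C$ is simplicial, Theorem \ref{thm: exact sequence} describes $\operatorname{Cl}(R_\F)$ as a finite group and, more importantly, the exact sequence \eqref{eqn:SES001} lets us compute, for a fixed ray $\rho$, exactly when the symbolic power $P_\rho^{(t)}$ becomes principal or, failing that, which monomial generates its "saturation". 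Concretely, $P_\rho^{(t)} = (\chi^m : \langle m, u_\rho\rangle \ge t)R_\F$ by the standard description of symbolic powers of torus-invariant height one primes (the Lemma \ref{lem:monomial-sandwiching01} analysis in height one, cf.\ \cite{BG-Polytopes-Ktheory}), so membership $\chi^\ell \in P_\rho^{(t)}$ is simply the inequality $\langle \ell, u_\rho\rangle \ge t$, and being in $P_\rho^r$ means $\ell$ is a sum of $r$ Hilbert-basis elements each pairing $\ge 1$ with $v_C$ along the relevant face — i.e.\ the $S$-counting from the proof of Lemma \ref{lem:monomial-sandwiching02}.

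The core step is the sharpness computation. Here is the mechanism I expect to use: pick $C$ so that $C^\vee$ has a primitive generator $w$ with $\langle w, v_C\rangle = B$, and such that the "cheapest" way to build a monomial $\chi^\ell$ with $\langle \ell, u_\rho\rangle$ large is via $w$ itself — so that a monomial lying deep in $P_\rho^{(N)}$ lies in $P_\rho^r$ for $r$ roughly $N/B$, and no better. More precisely, I would show $\chi^{(r-1)w} \in P_\rho^{(B(r-1))}$ (since $\langle (r-1)w, u_\rho\rangle$ equals $B(r-1)$ by the normalization that $u_\rho$ is the primitive generator of $\rho$ dual to the facet on which $w$ is supported, up to arranging the pairing) but $\chi^{(r-1)w} \notin P_\rho^r$ because writing $(r-1)w$ as a sum of $\ge r$ Hilbert-basis vectors in $P_\rho$ is impossible for degree/support reasons — each summand contributes at least $1$ to $\langle \cdot, u_\rho\rangle$, but the total is only $B(r-1) < Br$, and one must additionally check no "mixed" decomposition using smaller-pairing generators does better, which is where the choice of $C$ is engineered. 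That gives (1). For (2): if some $D' < B$ had $P^{(D'(r-1)+1)} \subseteq P^r$ for all $r$, then taking $r$ large and applying it to $\chi^{(r-1)w}$, which lies in $P^{(B(r-1))} \supseteq P^{(D'(r-1)+1)}$ once $B(r-1) \ge D'(r-1)+1$ (true for large $r$ since $B > D'$), would force $\chi^{(r-1)w} \in P^r$, contradicting (1). So (2) is essentially a formal consequence of (1) plus a growth comparison.

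I would organize the write-up as: (a) reduce to $C$ full pointed simplicial and recall $P_\rho^{(t)} = (\chi^m : \langle m, u_\rho\rangle \ge t)$; (b) construct the cone/ray realizing $B$ — likely a weighted-projective-space or cyclic-quotient cone where $w$ and $u_\rho$ and $v_C$ can all be written down and $\langle w, v_C\rangle = B$ by inspection, mirroring Example \ref{exm:Hypersurf-000}; (c) verify $\chi^{(r-1)w} \in P_\rho^{(B(r-1))}$ by a one-line pairing computation; (d) verify $\chi^{(r-1)w} \notin P_\rho^r$ by the decomposition-counting argument; (e) derive (2) from (1). The main obstacle is step (d): ruling out \emph{all} ways of writing $(r-1)w$ as a sum of $r$ Hilbert-basis vectors lying in $P_\rho$, including decompositions that mix $w$ with lower-weight generators. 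I expect to handle this by choosing $C$ so that $w$ is the \emph{unique} Hilbert-basis element on the relevant facet $F^*$ with maximal $v_C$-pairing and so that the lattice geometry forces any decomposition of $(r-1)w$ to use at most $r-1$ vectors from $P_\rho$ — e.g.\ by a parity or congruence obstruction modulo $\#\operatorname{Cl}(R_\F)$ coming from \eqref{eqn:SES001}, or simply because $(r-1)w$ has a bounded number of lattice points "below" it in the semigroup. A clean way to force this is to take the cone so that $P_\rho^{(t)}/P_\rho^{(t+1)}$ has a transparent monomial basis, making the non-containment a finite, deterministic check that then propagates to all $r$ by homogeneity.
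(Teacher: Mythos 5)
There is a genuine gap, and it starts with a misreading of the statement. The proposition asserts that for an \emph{arbitrary} given simplicial full pointed cone $C$ there exists a height one monomial prime $P$ with the stated non-containments; you do not get to engineer or choose $C$. Your plan to ``exhibit an explicit simplicial full pointed cone $C$'' (or tune it so that ``$w$ is the unique Hilbert-basis element on the relevant facet'' or so that ``$P_\rho^{(t)}/P_\rho^{(t+1)}$ has a transparent monomial basis'') is therefore the wrong shape of argument: you must work with whatever $C$ is handed to you. The first move the paper makes, and which is missing from your write-up, is to use simpliciality to index the primitive generators $v_1,\dots,v_n$ of $C$ and $w_1,\dots,w_n$ of $C^\vee$ biorthogonally via facet normals, so that $\langle w_j, v_i\rangle > 0$ if and only if $i=j$. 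This pins down which height-one monomial prime $P_{j_0}$ to use (the one with $B = \langle w_{j_0}, v_{j_0}\rangle$) and, via $\operatorname{div}(\chi^{w_{j_0}}) = \langle w_{j_0}, v_{j_0}\rangle P_{j_0}$, gives $P_{j_0}^{(B)} = (\chi^{w_{j_0}})R$ directly from the exact sequence \eqref{eqn:SES001}.

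The second and more serious problem is your step (d). You correctly flag that ruling out all decompositions of $\chi^{(r-1)w}$ into $r$ factors from $P_\rho$ is the crux, and you offer only speculative fixes (parity mod $\#\operatorname{Cl}$, lattice-point counting), none of which are carried out and none of which will work uniformly over all simplicial $C$. This difficulty is self-inflicted: the proposition only asks for a single $r \ge 2$, and $r=2$ suffices. Since $C$ is full and pointed, $R_\F$ is $\N$-graded and $\chi^{w_{j_0}}$ is an irreducible (Hilbert basis) element, hence a minimal homogeneous generator of $P_{j_0}$; Nakayama then gives $\chi^{w_{j_0}} \in P_{j_0} \setminus P_{j_0}^2$ with no combinatorial decomposition analysis at all. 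That single observation gives (1) at $r=2$, and (2) follows by contradiction exactly as you outline, except one can take $r=2$ (if $D' < B$ then $P^{(B)} \subseteq P^{(D'+1)} \subseteq P^2$, contradiction) rather than ``large $r$,'' so the asymptotic growth comparison you invoke is unnecessary. Your overall derivation of (2) from (1) is fine in spirit, but as written it relies on knowing (1) for large $r$, which is precisely the case your proposal cannot establish.
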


\begin{proof} 
Let $v_1, \ldots, v_n \in N$ and $w_1, \ldots, w_n \in M$ denote the primitive generators for $C$ and for $C^\vee$, respectively. We index these generators so that the nonnegative integer $\langle w_j, v_i \rangle$ is positive if and only if $i=j$: we may do this citing  the notion of facet normals 
 \cite[after Prop.~1.2.8]{torictome}. In deference to Lemma \ref{lem:prime-positivity}\eqref{eqn:mono-prime-defn}, let $P_j$ ($1 \le j \le n$) be the height one monomial prime in $R_\F$ such that a monomial $\chi^m  \in P_j$ if and only if $\langle m, v_j \rangle >  0$. In particular, $\chi^{w_j} \in P_j$ for each $j$. 
\begin{lem*}\label{thm:VarPi-FSig-Bound00}
\textit{For each $1 \le j \le n$, $\langle w_j , v_j \rangle $ is the order of the element in $\operatorname{Cl}(R_{\F})$ corresponding to $P_j$.  }
\end{lem*}
\noindent We may leverage exact sequence \eqref{eqn:SES001} from Theorem \ref{thm: exact sequence}, since Lemma \ref{lem:base-change-irrelevant} allows us reduce to the case where $\F$ is algebraically closed.  
For $1 \le j \le n$, we have $0 = [\operatorname{div}(\chi^{w_j})] =  \langle w_j, v_j \rangle [D_{\rho_j}]$, where $\rho_j$ is the rational ray of $C$ generated by $v_j$. 
Thus $P_j^{ (\langle w_j, v_j \rangle) } = (\chi^{w_j})R.$ 
Since the order of $[D_{\rho_j}]$ is the smallest $E_j>0$ such that $P_j^{(E_j)} = (\chi^{m_j})R$ is principal for some $m_j \in C^\vee \cap M -\{0\}$, $E_j$ divides $\langle w_j, v_j \rangle$, and $P_j^{(\langle w_j, v_j \rangle)} = (P_j^{(E_j)})^L$ where $L = \langle w_j, v_j \rangle / E_j$. 
Since $C^\vee$ is strongly convex, we may conclude that  $\chi^{w_j} = \chi^{L \cdot m_j}$, and $L=1$  since $w_j$ is an irreducible vector in $C^\vee \cap M$; see Subsection \ref{subsub:Hilb00}. This proves the lemma. 

\noindent To prove (1), 
notice $B = \langle w_{j_0}, v_{j_0} \rangle$ for some $1 \le j_0 \le n$. 
Then (*): $(\chi^{w_{j_0}})R = P_{j_0}^{(B)} \not\subseteq P_{j_0}^2$. We recall from Section \ref{sec:Toric-Alg-Prelims00} that when the cone $C$ is full-dimensional in $N_\R$, 
the semigroup algebra $R_\F = \F [C^\vee \cap M]$ can be $\N$-graded. This means that any minimal generator $f$ of a homogeneous ideal $I$ satisfies $f \in I - I^2$ by Nakayama's lemma. In our situation, $I = P_{j_0}$ and $f = \chi^{w_{j_0}}$. 
Observation (*) also gives part (2), arguing by contradiction and using \cite[Lem.~3.3]{Walker002} accordingly.
\end{proof}

We offer several examples to show that establishing sharpness of our bilinear multipliers 
is a delicate  matter meriting further study. 

\begin{exm}\label{exm:Vero-Hypersurf00}
We fix integers $n \ge 2$ and $E \ge 2$, and an arbitrary field $\F$. Let $V_{E, n}$ be the $E$-th Veronese subalgebra of the polynomial ring $\F [x_1, \ldots, x_n]$, that is, the $\F$-algebra generated by all monomials of degree $E$ in $x_1, \ldots, x_n$. Then $P^{(E (r-1) + 1)} \subseteq P^r$ for all $r>0$, all monomial primes, and all primes of height one by Theorem \ref{thm:height-one-primes}. 
See \cite[Sec.~4]{Walker002} for details. However, for any $E' < E$, the proof of Proposition \ref{prop:sharpness-in-height-one} guarantees that we can find a prime $P \subseteq V_{E, n}$ (monomial, height one) such that $P^{(E' (r-1)+1 )} \not\subseteq P^r$ for some $r \ge 2$, namely, for $r = 2$. In fact, this last observation holds for all monomial primes in $V_{E,n}$, aside from the zero ideal and the maximal monomial ideal for which $E'=1$ will do; the proof of \cite[Thm.~4.3]{Walker002} confirms this explicitly.    
\end{exm}

Despite Example \ref{exm:Vero-Hypersurf00}, Theorem \ref{thm:height-one-primes} does not give sharp multipliers in general. For example,  

\begin{exm}
For any $n > 2$, let $R = \F [Z, X_1, \ldots, X_n] / (Z^2 - X_1 \cdots X_n)$ as in  Example \ref{exm:Hypersurf-000}.  Citing the proof of \cite[Thm.~4.1]{Walker002}, when $P \subseteq R$ is any monomial prime of height at least 2:
\begin{itemize}
\item $P^{(r)} = P^r$ for all $r >0$; however,    
\item The multiplier $D'$ corresponding to $P$ in 
Lemma \ref{lem:monomial-sandwiching02} always satisfies  $D' \ge 2$. 
\end{itemize}
\end{exm}

Theorem \ref{thm:ITAU-sandwhiching000} gives a uniform multiplier $D$ that works for all monomial primes. Even when this multiplier is sharp across all monomial primes, it need not be best possible for all monomial primes of a given height, contrasting with the situation of Example \ref{exm:Vero-Hypersurf00}. For example,   

\begin{exm}\label{exm:poly-Segre-prod00}
Let $R = \F [C^\vee \cap \Z^3] = \F [x, y, z, w] / (xy- zw)$, for the non-simplicial cone $C \subseteq \R^3$ with  $e_1, e_2, e_1 + e_3 , e_2+e_3 $ as primitive generators. Theorem \ref{thm:ITAU-sandwhiching000} says $P^{(2 r - 1)} \subseteq P^r$ for all $r>0$ and all monomial primes in $R$, observing that $C^\vee \cap \Z^3$ is minimally generated by $\mathcal{B} = \{e_1, e_2, e_3, (1, 1, -1)\}$. Given any height two monomial prime $P$ in $R$, these containments cannot be improved to $P^{(r)} = P^r$ for all $r \ge 2$. For instance, if $P = (x, y, z)R$, then for any $s \ge 1$, $z^s \in P^{(2s)} -  P^{2s}$ and $z^{s+1} \in P^{(2 s + 1)} - P^{2s+1}$: indeed, $w^s \in R- P$ and $R$ can be standard graded, so the least degree of a homogeneous element of $P^r$ is $r$. By contrast, $P^{(r)} = P^r$ for all $r$  and for any height one monomial prime $P$ in $R$: the invariant $D' = 1$ in Lemma \ref{lem:monomial-sandwiching02} via direct computation. 
\end{exm}

\subsubsection{Final Example Computation: Segre-Veronese algebras} In what follows,  $\F$ is a fixed arbitrary field. For more on Segre products, \cite{GotoWatanabeGraded1} is often cited as a standard reference.

\begin{definition}\label{defn:Segre-finite-type}
Fix a family $A_1, \ldots, A_k$ of $k$ standard graded algebras of finite type over $\F$, with $A_i = \F [a_{i, 1}, \ldots, a_{i, b_i}]$ in terms of algebra generators. Their \textbf{Segre product} over $\F$ is the ring $S = (\#_\F )_{i=1}^k  A_i$ generated up to isomorphism as an $\F$-algebra by all $k$-fold products of the $a_{i,j}$. 
\end{definition}

\begin{definition}\label{defn:Vero}
We fix integers $E \ge 1$ and $m \ge 2$. Suppose $A = \F [x_1, \ldots, x_m]$ is a standard graded polynomial ring in $m$ variables over a field $\F$. 
Let $V_{E, m} \subseteq A$ denote the $E$-th \textbf{Veronese subring} of $A$, the standard graded $\F$-subalgebra generated by all monomials of degree $E$ in the $x_i$. There are $\binom{m-1 + E}{E}$ such monomials; this number is the \textbf{embedding dimension} of $V_{E, m}$.
\end{definition}

\begin{definition}\label{defn:Segre-Vero}
Fix $k$-tuples $\overline{E} = (E_1, \ldots, E_k) \in (\Z_{\ge 1})^k$ and  $\overline{m} = (m_1, \ldots, m_k) \in (\Z_{\ge 2})^k$ of integers, with $k \ge 1$.  Furthermore, we set $d(j) = \left( \sum_{i=1}^j m_i \right) - (j-1) $ for each $1 \le j  \le k$: $d(k)$ is the Krull dimension of the Segre product  $SV \left(\overline{E},\overline{m}\right) = (\#_\F)_{i=1}^k V_{E_i, m_i}$ of $k$ Veronese rings  in $m_1, \ldots, m_k$ variables, respectively; this is  a \textbf{Segre-Veronese algebra with degree sequence $\overline{E}$.}
\end{definition}

\noindent Over any perfect field $\KK$, a Segre-Veronese algebra has  uniform symbolic topologies on all primes, per \cite[Thm.~2.2]{ELS}, \cite[Thm.~1.1]{HH1}, and \cite[Cor.~3.10]{HKV}. However, explicit $D$ are elusive unless $k=1$; see \cite[Cor.~3.30]{5authorSymbolicSurvey}. We provide an effective $D$ for the monomial primes, which is sharp as suggested by Examples \ref{exm:Vero-Hypersurf00} and \ref{exm:poly-Segre-prod00}, the latter being the Segre product of  polynomial rings in two variables:

\begin{theorem}\label{thm:SegreVeroMonoUSTP01}
\textit{
Suppose $A = SV \left(\overline{E},\overline{m}\right)$ is a Segre-Veronese algebra over $\F$ with degree sequence $\overline{E} = (E_1, \ldots, E_k)$. Let $D := \sum_{i=1}^k E_i$. Then $P^{(D (r-1)+1)} \subseteq P^r$ for all $r>0$ and all monomial primes $P$ in $A$.  
}
\end{theorem}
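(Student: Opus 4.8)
The plan is to realize $A = SV(\overline{E},\overline{m})$ as the normal toric algebra attached to an explicit full pointed rational polyhedral cone $C \subseteq N_\R$, and then simply invoke Theorem~\ref{thm:ITAU-sandwhiching000} after computing the invariant $D = \max_{m \in \mathcal{B}}\langle m, v_C\rangle$ for that cone. First I would recall the standard toric description of a single Veronese ring $V_{E_i, m_i}$: it is $\F[C_i^\vee \cap M_i]$ where, in the lattice $M_i = \Z^{m_i}$ with dual $N_i$, the cone $C_i$ is spanned by $\{E_i e_1, \ldots, E_i e_{m_i-1}, \; -e_1 - \cdots - e_{m_i-1} + e_{m_i}\}$ (or an equivalent normalization), so that $C_i^\vee \cap M_i$ consists of the degree-$E_i$-divisible lattice points of the positive orthant in the appropriate grading. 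The Segre product corresponds to taking the ``product'' of these data: $N_\R = \bigoplus_i (N_i)_\R$ modulo the identification that glues the $k$ grading directions into one, equivalently $C \subseteq N_\R$ is the cone whose dual $C^\vee \cap M$ is exactly the subsemigroup of $\bigoplus_i (C_i^\vee \cap M_i)$ on which the $k$ total-degree functionals agree. I would make this gluing precise, check that the resulting cone $C$ is full and pointed (pointedness because each $C_i^\vee$ is pointed after the reduction of Remark~\ref{rem:strong-convexity-stipulation}, fullness by a dimension count against $d(k) = \dim A$), and record its primitive ray generators.

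Next I would identify the Hilbert basis $\mathcal{B}$ of $C^\vee \cap M$ and the vector $v_C \in N$. The key structural point is that the minimal generators of the Segre product are precisely the $k$-fold products $a_{1,j_1}\cdots a_{k,j_k}$ of the algebra generators of the individual Veronese rings; in lattice terms the elements of $\mathcal{B}$ restrict on each factor to a Hilbert-basis element of $C_i^\vee \cap M_i$ of total degree exactly $E_i$ in that factor's grading (together with a bounded number of ``corner'' generators coming from the $-e_1-\cdots-e_{m_i-1}+e_{m_i}$ rays, which contribute non-positively). Meanwhile $v_C$, being the sum of the primitive generators of $C$, pairs with any $w \in C^\vee \cap M$ to give the sum over $i$ of the total degree of $w$ in the $i$-th factor's grading. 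Consequently, for $w \in \mathcal{B}$, $\langle w, v_C\rangle \le \sum_{i=1}^k E_i = D$, with equality achieved by the generator corresponding to a product of variables $x_{1}^{E_1}\cdots$ type monomials — i.e. the class of $a_{1,1}\cdots a_{k,1}$ — so that $\max_{w \in \mathcal{B}}\langle w, v_C\rangle = D$ exactly. Then Theorem~\ref{thm:ITAU-sandwhiching000} gives $P^{(D(r-1)+1)} \subseteq P^r$ for all monomial primes $P$, which is the claim.

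The main obstacle I anticipate is the bookkeeping in the second step: pinning down the Hilbert basis of the glued cone $C^\vee \cap M$ precisely enough to be sure that (a) every element $w$ of $\mathcal{B}$ satisfies $\langle w, v_C\rangle \le D$ and (b) the bound is attained, especially handling the contribution of the rays $-e_1-\cdots-e_{m_i-1}+e_{m_i}$ and checking these do not force a generator with $\langle w, v_C\rangle > D$. It is cleanest to argue this intrinsically rather than by an explicit enumeration: one shows that if $w \in C^\vee \cap M$ has $\langle w, v_C\rangle > D = \sum E_i$, then in at least one factor $i$ its total degree exceeds $E_i$, and there a degree-$E_i$ monomial generator of $V_{E_i,m_i}$ divides it, hence $w$ decomposes in $C^\vee \cap M$ and is not irreducible; this immediately yields $\max_{w \in \mathcal{B}}\langle w, v_C\rangle \le D$. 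Attainment is then witnessed concretely. For $k=1$ this recovers the known Veronese value $D = E$, and for the Segre product of polynomial rings ($E_i = 1$) it recovers $D = k = $ number of factors, consistent with Example~\ref{exm:poly-Segre-prod00}.
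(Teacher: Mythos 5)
Your approach matches the paper's: realize $A$ as $\F[C^\vee \cap M]$ for an explicit full pointed cone $C$, identify the Hilbert basis $\mathcal{B}$ and the vector $v_C$ (sum of primitive ray generators), and invoke Theorem~\ref{thm:ITAU-sandwhiching000}. The paper carries this out in $N \cong \Z^{d(k)}$ with $d(k) = (\sum_i m_i) - (k-1)$ and writes down generators under which $v_C = (\sum_j E_j)\,e_{m_1}$ while every element of $\mathcal{B}$ carries coefficient exactly $1$ on $e^*_{m_1}$; hence $\langle m, v_C\rangle = D$ for \emph{all} $m \in \mathcal{B}$, the maximum is an identity, and there is nothing left to estimate. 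Your ``intrinsic'' irreducibility argument for the bound $\langle w, v_C\rangle \le D$ is therefore superfluous once the coordinates are chosen well, though the observation behind it (an element of Segre-degree at least two decomposes in $C^\vee \cap M$) is correct. One slip worth flagging: the single-Veronese cone you propose, $\{E_i e_1, \ldots, E_i e_{m_i-1},\, -e_1-\cdots-e_{m_i-1}+e_{m_i}\}$, spans the same rays as $\{e_1, \ldots, e_{m_i-1},\, -e_1-\cdots-e_{m_i-1}+e_{m_i}\}$, whose dual semigroup ring is a \emph{polynomial} ring, not $V_{E_i,m_i}$. The weight $E_i$ must go on the last ray, as in the paper's $A_1 = \{e_1, \ldots, e_{m_1-1},\, -e_1-\cdots-e_{m_1-1}+E_1 e_{m_1}\}$; your ``equivalent normalization'' caveat suggests you would have caught this when making the gluing precise.
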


\begin{proof} 
Given a lattice $N \cong \Z^d$ we will use $e_1, \ldots, e_d \in N$ to denote a choice of basis for $N$ will dual basis $e_1^* , \ldots, e_d^*$ for $M$.  
In the setup of Theorem \ref{thm:ITAU-sandwhiching000}, the cardinality of the minimal generating set $\mathcal B$ of $C^\vee \cap M$ is the \textbf{embedding dimension} of the toric algebra  $R_\F = \F [C^\vee \cap M]$; we refer the reader to \cite[Sec.~1.0, Proof of Thm.~1.3.10]{torictome}. 

\begin{lem*}\label{lem:Segre-Vero-ToricPresentation01}
\textit{
For $A$ as stated, there exists a lattice $N$ and a full pointed rational polyhedral cone $C \subseteq N_\RR$ such that $A = \F [C^\vee \cap M]$ up to isomorphism. 
}
\end{lem*}

\begin{proof}
Fix $k$-tuples $\overline{E} \in (\Z_{\ge 1})^k$ and  $\overline{m} \in (\Z_{\ge 2})^k$. Set $d(j) = \left( \sum_{i=1}^j m_i \right) - (j-1)$ for $1 \le j  \le k$, while $d (0) = 0$. 
Given $SV \left(\overline{E},\overline{m}\right) = (\#_\F)_{i=1}^k V_{E_i, m_i}$, we fix a lattice $N \cong \Z^{d(k)}$ and record a cone $C = C \left(\overline{E},\overline{m}\right) \subseteq N_\RR \cong \RR^{d(k)}$ as stipulated with $R_\F = \F [C^\vee \cap M] \cong SV \left(\overline{E},\overline{m}\right)$. Specifically, consider the 
cone $C \subseteq N_\RR$ generated by the following irredundant collection of primitive vectors:  
\begin{align*}
\mathcal{A} = \bigcup_{1 \le j \le k} A_j, \mbox{ where } A_1 &= \{e_1, \ldots, e_{m_1 - 1},  - e_1 -  \cdots - e_{m_1 - 1} + E_1 \cdot e_{m_1}\}, \\ 
\mbox{ and for each }2 \le j \le k, \quad A_j &=  \left\lbrace e_h, \mbox{ } E_j \cdot e_{m_1} - \sum_{h = d(j-1)+1}^{d(j)} e_{h} \colon d(j-1) + 1 \le h  \le d(j) \right\rbrace.  
\end{align*} 
The semigroup $C^\vee \cap M$ is generated by  the following set of irreducible vectors:
\begin{align*}
\mathcal{B} = \left\lbrace e^*_{m_1} +  \sum_{j=1}^k \sum_{\ell = 1}^{m_j - 1} a_{j , \ell} \cdot e^*_{d(j-1) + \ell}  \colon  0 \le \sum_{\ell =1}^{m_j - 1} a_{j, \ell} \le E_j \mbox{ for }1 \le j \le k \right\rbrace.
\end{align*}
Indeed, $\# \mathcal{B} = \prod_{j=1}^k \binom{m_j - 1 + E_j}{E_j}$, the embedding dimension of $SV (\overline{E} , \overline{m})$.  
Finally, one can record a bijection between the monomial generators of $R_\F$ and those typically used to present $SV (\overline{E} , \overline{m})$; cf., \cite[Proof of Lem.~4.2]{Walker002} for how the bijection would look in the coordinates $a_{j, \ell}$ for each $j$. 
\end{proof}
\noindent 
Feeding $v_C = \sum_{u \in \mathcal{A}} u = (\sum_{j=1}^k E_j) \cdot e_{m_1}$ and $\mathcal{B}$ into Theorem \ref{thm:ITAU-sandwhiching000}  yields $D = \sum_{j=1}^k E_j$.  We win! 
\end{proof}

In passing, we note that the corresponding class of varieties (Segre-Veronese varieties) form a cornerstone of a lot of investigations in classical and applied algebraic geometry, often being varieties for which one has a more incisive handle on some specified computational- or other task. 
As a sampling of recent investigations in this vein, we include \cite{BallicoBernardi1701,BallicoBernardi1706,BallicoBernardi1707,DufresneJeffries1602,Schreyer1707} among our references.

\end{document}